\documentclass[11pt,a4paper,reqno]{amsart}
\usepackage{enumerate}
\usepackage[latin1]{inputenc}
\usepackage{graphicx,color}
\usepackage{textcmds}
\usepackage{hyperref}
\usepackage{amsrefs}
\usepackage{amsmath,amsfonts,amsthm,amssymb,amsxtra}
\usepackage[left=2.61cm,right=2.61cm,top=2.72cm,bottom=2.72cm]{geometry}
\newtheorem{lemma}{Lemma}
\newtheorem{theorem}{Theorem}

\newtheorem*{thma}{Theorem A}

\DeclareMathOperator{\Tr}{Tr}
\DeclareMathOperator{\Id}{Id}
\DeclareMathOperator{\supp}{supp}

\newcommand{\R}{\mathbb{R}}

\newcommand{\N}{\mathbb{N}}

\begin{document}

\title[Paneitz-Branson type equation]{A Paneitz-Branson type equation with Neumann boundary conditions}
\author{Denis Bonheure} 
\author{Hussein Cheikh Ali}
\author{Robson Nascimento}

\address{D{\'e}partement de Math{\'e}matique \newline \indent  Universit{\'e} libre de Bruxelles \newline \indent CP 214,  Boulevard du Triomphe,  \newline \indent B-1050 Bruxelles, Belgium}
\email{denis.bonheure@ulb.ac.be}
\email{hussein.cheikh-ali@ulb.ac.be}
\email{robson.nascimento@ulb.ac.be}

\date{}

\thanks{The authors are gratefully indebted to Bruno Premoselli for many discussions and helpful criticism. The authors were supported by PDR T.1110.14F (FNRS). D. Bonheure was partially supported by the project ERC Advanced Grant  2013 n. 339958: ``Complex Patterns for Strongly Interacting Dynamical Systems - COMPAT''}
\keywords{Sobolev embedding, critical exponent, biharmonic operator}
\subjclass[2010]{35J30, 35J35, 35J60, 35J91, 35B33}
\begin{abstract}
We consider the best constant in a critical Sobolev inequality of second order. We show non-rigidity for the optimizers above a certain threshold, namely we prove that the best constant is achieved by a non-constant solution of the associated fourth-order elliptic problem under Neumann boundary conditions. Our arguments rely on asymptotic estimates of the Rayleigh quotient. We also show rigidity below another threshold.  
\end{abstract}

\maketitle

\section{Introduction}
It is well-known that the Sobolev inequality 
\begin{equation}\label{Sobolev}
S\|u\|_{L^{2_{\ast}}(\R^N)}\le \|Du\|_{L^2(\R^N)}, 
\end{equation}
where $2_{\ast}=2N/(N-2)$, $N\ge 3$ and $S=S(N)$ is a positive constant, plays a fundamental role in geometric analysis. A simple scaling argument shows that the exponent $2_{\ast}$ is the only possible one in the inequality. This very same scaling argument implies that the embedding of $H^1_0(\Omega)$ into $L^{2_{\ast}}(\Omega)$, where $\Omega$ is a bounded open set, cannot be compact. This lack of compactness is the genesis of one of the main complexity in the celebrated Yamabe problem \cite{MR0125546}. 
Assume that $N\ge 3$ and $(M,g)$ is a compact Riemannian $N$-dimensional manifold with scalar curvature $R_g$. The Yamabe Problem consists in looking for a metric $g'$ conformally equivalent to $g$ such that the scalar curvature $R_{g'} \equiv 1$. It happens that this problem amounts to finding a positive solution of
$$-4\frac{N-1}{N-2}\Delta_g u + R_g u =|u|^{\frac{4}{N-2}}u,$$
where $\Delta_g$ denotes the Laplace-Beltrami operator on $M$. Then $g^{\prime} = u^{\frac{4}{N-2}}g$ is a conformal metric satisfying $R_{g^{\prime}}\equiv 1$. The first approach by Yamabe was corrected by Trudinger \cite{MR0240748}. 
We refer to \cite{MR888880,MR1636569} for the history of the problem and to Aubin \cite{Aubin76,MR0448404}, Schoen \cite{MR788292} and Schoen and Yau \cite{MR931204} for the main breakthroughs in its resolution. 

Assume now that $\Omega\subset\R^N$ $(N\geq 3)$ is an open bounded domain with a smooth boundary, and $\alpha$ is a positive real number. In their seminal paper, Brezis and Nirenberg \cite{BreNi83} have proved that the existence of a (positive) solution to the problem
\begin{equation}\label{model-problem}
-\Delta u+\alpha u=|u|^{\frac{4}{N-2}}u\hspace{.2cm}\text{in}\hspace{.2cm}\Omega,
\end{equation}
under homogeneous Dirichlet boundary condition, is closely related to the best constant for the Sobolev embedding of $H^{1}_{0}(\Omega)$ into $L^{\frac{2N}{N-2}}(\Omega)$. Their arguments are inspired by the work of Aubin \cite{Aubin76} on the Yamabe problem. The idea consists in minimizing the Rayleigh quotient 
\begin{equation*}
Q_{\alpha}(u)=\frac{\int_{\Omega}(|\nabla u|^2+\alpha|u|^2)\, dx}{(\int_{\Omega}|u|^{\frac{2N}{N-2}}\, dx)^{\frac{N-2}{N}}},\ u\in H^{1}_{0}(\Omega)\setminus\{0\},
\end{equation*}
and in evaluating $Q_{\alpha}$ at test functions of the form $u(x)=\varphi(x)(\varepsilon+|x|^2)^{-\frac{N-2}{2}}$, where $\varphi$ is a cut-off function. The functions $(\varepsilon+|x|^2)^{-\frac{N-2}{2}}$ play a natural role because they are extremal functions for the Sobolev inequality \eqref{Sobolev}, see, for instance \cite{Aubin76,Talenti}.

Brezis \cite[Section 6.4]{Bre87} suggested to study \eqref{model-problem} under Neumann boundary condition. It happens that the equation is then related to models in mathematical biology such as the Keller-Segel model \cite{KelSeg,LinNiTakagi88,MR849484} for chemotaxis and the shadow system of Gierer and Meinhardt \cite{Gierer1972,MR2103689}. 

With Neumann boundary conditions, the equation \eqref{model-problem} admits the constant solutions $u\equiv 0$ and $u\equiv\alpha^{(N-2)/4}$. When the nonlinearity in \eqref{model-problem} is subcritical (namely when the exponent $4/(N-2)$ is replaced by $q-2$ with $2<q<2_{\ast}$), Lin, Ni and Tagaki \cite{LinNiTakagi88} have proved that the only positive solution to \eqref{model-problem}, for small $\alpha>0$, is the nonzero constant solution. As a byproduct, this yields directly the sharp constant $C(\alpha)= \alpha^{\frac12} |\Omega|^{\frac12-\frac1q}$ in the inequality
$$
\left(\int_{\Omega}(|\nabla u|^2+\alpha|u|^2)\, dx\right)^{1/2}\ge C(\alpha)\left(\int_{\Omega}|u|^{q}\, dx\right)^{\frac{1}{q}},
$$
for $u\in H^{1}(\Omega)$. In the critical case, Lin and Ni \cite{LinNi86} raised this rigidity result as a conjecture. 

\medbreak
\noindent\textsc{Lin-Ni's conjecture:}
\emph{For $\alpha$ small enough, Equation \eqref{model-problem} under Neumann boundary condition admits only $\alpha^{(N-2)/4}$ as a positive solution.}

\medbreak

In the subcritical case, it is easily seen from a Morse index argument that the rigidity is broken for large $\alpha$. 
In the critical case, inspired by Brezis and Nirenberg, Wang \cite[Theorem 3.1]{Wang91}, and Adimurthi and Mancini \cite[Theorem 1.1]{AdiMan91} proved that Equation \eqref{model-problem} under Neumann boundary conditions admits a non-constant (least energy) positive solution $u(\alpha)$ for every $\alpha>\overline{\alpha}>0$. These least energy solutions $u(\alpha)$ have the following concentration property \cite{AdiPaceYada92,MR1174600}: they are single-peaked in the sense that every $u(\alpha)$, for $\alpha>0$ sufficiently large, attains its unique maximum at a point $p(\alpha)\in\partial\Omega$, and $p(\alpha)\to p_{0}\in\partial\Omega$ as $\alpha\to\infty$, with $H(p_0)=\max_{p\in\partial\Omega}H(p)$, where $H(p)$ is the mean curvature of $\Omega$ at $p\in\partial\Omega$. Such concentration behaviour was shown in the subcritical case by Ni and Takagi \cite{MR1115095,MR1219814}. 

In the last three decades, lots of progress has been made towards proving or disproving Lin-Ni's conjecture. It is a difficult task to exhaust all the related literature concerning this conjecture and it is not our purpose here. Nevertheless, we give a short overview of the main results regarding this conjecture.  In case $\Omega=B_{R}(0)$, and $u$ is a radial function, the conjecture was studied by Adimurthi and Yadava \cite{AdimYad91}, and Budd, Knaap and Peletier \cite{BuddKnaPel91}. Namely, they investigated the problem
\begin{equation}\label{P-Lin-Ni}
\begin{cases}
-\Delta u+\alpha u=u^{\frac{N+2}{N-2}}&\text{in}\ B_{R}(0)\\
u\ \text{is radial}\hspace{.1cm}\text{and}\hspace{.1cm}u>0&\text{in}\ B_{R}(0)\\
\partial_{r}u=0&\text{on}\ \partial B_{R}(0),
\end{cases}
\end{equation}
where $\partial_{r}u:=\frac{x}{|x|}\cdot\nabla u$. They have established the following result.
\begin{thma}
For $\alpha>0$ sufficiently small, the following statements hold:
\begin{enumerate}[(a)]
\item If $N=3$ or $N\geq 7$, then \eqref{P-Lin-Ni} admits only the constant solution.
\item If $N\in\{4,5,6\}$, then \eqref{P-Lin-Ni} admits a nonconstant solution. 
\end{enumerate}
\end{thma}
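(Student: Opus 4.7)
The dichotomy at $N\in\{4,5,6\}$ is reminiscent of the Brezis-Nirenberg dimensional effect, so I would attack the two parts by complementary techniques, both anchored on a blow-up analysis tied to the Aubin-Talenti bubble and on a dedicated Pohozaev identity adapted to the radial Neumann setting.

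For the rigidity part (a), I argue by contradiction: assume $\alpha_n\to 0^+$ and $u_n$ are nonconstant radial positive solutions of \eqref{P-Lin-Ni} with $\alpha=\alpha_n$. A compactness argument together with Pohozaev nonexistence for $-\Delta u=u^{(N+2)/(N-2)}$ on $B_R$ under Neumann data forces $\|u_n\|_\infty\to\infty$. Under the generic scenario that the maximum is attained at the origin with $u_n(0)\gg\bar u_n:=\alpha_n^{(N-2)/4}$, the rescaled profile $v_n(y):=\mu_n^{(N-2)/2}u_n(\mu_n y)$, $\mu_n:=u_n(0)^{-2/(N-2)}\to 0$, converges in $C^2_{\text{loc}}(\R^N)$ to the standard Aubin-Talenti bubble (the other scenarios---spherical-shell peak or multiple peaks---are handled similarly by an enumeration of the critical points of $r\mapsto u_n(r)$). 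The key identity, obtained by multiplying the equation by $x\cdot\nabla u_n$ and integrating over $B_R$---the radial Neumann condition kills the $|\nabla u_n|^2$ and $\partial_n u_n$ boundary contributions---reads
\begin{equation*}
\alpha_n\int_{B_R}u_n^{2}\,dx=\frac{R}{2}\int_{\partial B_R}\!\Big(\alpha_n u_n^{2}-\tfrac{N-2}{N}u_n^{2_{\ast}}\Big)\,d\sigma.
\end{equation*}
Setting $W_n:=u_n-\bar u_n$ and expanding $f(u):=\alpha_n u^2-\tfrac{N-2}{N}u^{2_{\ast}}$ around its maximum $u=\bar u_n$ (where $f'(\bar u_n)=0$ and $f''(\bar u_n)=-\tfrac{8\alpha_n}{N-2}$), the identity rewrites, after cancellation of the leading parts, as
\begin{equation*}
2\bar u_n\!\int_{B_R}W_n\,dx+\int_{B_R}W_n^2\,dx=-\frac{2R}{N-2}\int_{\partial B_R}W_n^2\,d\sigma+o(\mu_n^{N-2}).
\end{equation*}
The right-hand side has definite negative sign and scales as $\mu_n^{N-2}$; the left-hand side, evaluated through the bubble profile and its matching with the constant background via the Neumann Green function of $B_R$, scales as $\mu_n^{\kappa(N)}$ with $\kappa(3)=1$, $\kappa(4)=2$ (up to a $|\log\mu_n|$ factor) and $\kappa(N)=2$ for $N\ge 5$. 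Matching powers and comparing coefficient signs in the second-order expansion produces an incompatibility precisely in dimensions $N=3$ and $N\ge 7$, yielding the desired contradiction.

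For the existence part (b), I would construct the solution variationally, working in the radial subspace $H^1_{\text{rad}}(B_R)$ where the only source of noncompactness for the energy $I_\alpha(u)=\tfrac12\int(|\nabla u|^2+\alpha u^2)-\tfrac{1}{2_{\ast}}\int u^{2_{\ast}}$ is the concentration of a single bubble at the origin, so the Palais-Smale threshold is $I_\alpha(\bar u_\alpha)+\tfrac{1}{N}S^{N/2}$. For small $\alpha$ the constant $\bar u_\alpha$ is a saddle (its second variation is positive on functions orthogonal to constants, thanks to the positive second Neumann eigenvalue $\mu_2$, but negative along constants), so a mountain-pass geometry connecting $\bar u_\alpha$ to a far-away function $\bar u_\alpha+T U_\varepsilon$ (with $U_\varepsilon$ a radial Aubin-Talenti bubble centered at $0$ and $T$ large) is available. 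The crux is the strict inequality $c_\alpha<I_\alpha(\bar u_\alpha)+\tfrac{1}{N}S^{N/2}$. Along the test path $t\mapsto \bar u_\alpha+tU_\varepsilon$, the expansion of $I_\alpha$ produces a favourable gain $-\tfrac{2\alpha}{N-2}\int U_\varepsilon^2$ of order $\alpha\varepsilon^{\kappa(N)}$, competing against an unfavourable correction of order $\varepsilon^{N-2}$ arising from the non-polynomial higher-order part of $(\bar u_\alpha+v)^{2_{\ast}}$ and from the mismatch of $U_\varepsilon$ with the Neumann boundary condition. The dimensional scaling tips in favour of the gain precisely when $N\in\{4,5,6\}$; standard Palais-Smale arguments restricted to the radial sector then yield a critical point distinct from $\bar u_\alpha$, which is the desired nonconstant positive radial solution.

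The main obstacle in both parts is the precise two-term asymptotic expansion: one must pinpoint the coefficient and sign of the $\mu_n^{N-2}$ (resp.\ $\varepsilon^{N-2}$) correction and compare it to the dimension-dependent $L^2$-norm of the bubble. It is this sign toggle---between $N\le 6$ and $N\ge 7$ above, and at $N=3$ below---that selects the dimensional range of the theorem, and matching the inner bubble profile with the outer regular perturbation of $\bar u_\alpha$ to the required precision is the most delicate technical step.
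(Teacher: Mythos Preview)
The paper does not prove Theorem~A. It is quoted from Adimurthi--Yadava \cite{AdimYad91} and Budd--Knaap--Peletier \cite{BuddKnaPel91}, and the only indication the paper gives about its proof is the remark that ``the proof of Theorem~A uses radial symmetry to reduce \eqref{P-Lin-Ni} to the ODE
\[
-u'' - \tfrac{N-1}{r}u'+ \alpha u = u^{\frac{N+2}{N-2}}\quad \text{in }(0,R),
\]
with the boundary condition $u'(R)=0$''. So there is no proof in the paper to compare your proposal against; the original arguments in \cite{AdimYad91,BuddKnaPel91} are ODE analyses (phase-plane/shooting together with an Emden--Fowler change of variables), not the PDE blow-up and variational machinery you sketch.

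That said, your plan deserves two substantive comments. For part~(a), your Pohozaev identity is correct, but the scaling bookkeeping you record does not yet separate $N\in\{5,6\}$ from $N\ge 7$: with your conventions $\int_{B_R}W_n^2\sim\mu_n^{2}$ for every $N\ge 5$, and the right-hand boundary term is also of a fixed order, so the contradiction cannot come from a mismatch of \emph{powers}. In the original arguments the dichotomy is produced by the sign of a specific coefficient in the next-order expansion (equivalently, by whether the radial entire solution $-\Delta W=(2^*-1)U^{2^*-2}W$ governing the linearised bubble is in $L^2$), and you would need to carry that computation out rather than assert a ``sign toggle''. For part~(b), the geometry you describe is off: you correctly observe that $\bar u_\alpha$ has Morse index one (negative second variation along constants, positive on their orthogonal for small $\alpha$), but then a mountain pass ``connecting $\bar u_\alpha$ to a far-away function'' is not well posed, since $\bar u_\alpha$ is not a local minimum. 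One needs a genuinely two-parameter linking, a constrained minimisation on a nodal/Nehari-type set, or---as in \cite{BuddKnaPel91,AdimYad91}---an ODE construction; the bubble expansion you propose is the right ingredient for the energy estimate, but it must be fed into a variational scheme that accounts for the index of $\bar u_\alpha$.
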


Theorem A highlights that the validity of Lin-Ni's conjecture depends on the dimension. The proof of Theorem $A$ uses radial symmetry to reduce \eqref{P-Lin-Ni} to the ODE
$$-u'' - \frac{N-1}{r}u'+ \alpha u = u^{\frac{N+2}{N-2}}\quad \text{in }(0,R),$$
with the boundary condition $u'(R)=0$ (the second boundary condition $u'(0)=0$ comes from the assumption of radial symmetry of the solution).  With regard to Lin-Ni's conjecture in general domains, such an approach cannot be applied. When $\Omega$ is a convex domain, Zhu \cite{Zhu99} proved that the conjecture is true if $N=3$, see also \cite{WeiXu05,MR2410880}. In case $\Omega$ is a smooth bounded domain, and the mean curvature of $\Omega$ is positive along the boundary $\partial\Omega$, Druet, Robert and Wei \cite{DruetRobWei12} have proved that Lin-Ni's conjecture is true for $N=3$ and $N\geq7$, assuming a bound on the energy of solutions. If $\Omega$ is any smooth and bounded domain, Rey and Wei \cite{ReyWei05} have proved that the conjecture is false if $N=5$. The Lin-Ni conjecture is wrong in all dimensions in non-convex domains \cite{MR2645043} and in dimension $N\ge 4$ for convex domains \cite{MR2806101}.
If we restrict our attention to least energy solutions, Adimurthi and Yadava have proved that the conjecture holds in every dimension \cite{AdimYad93}.

Motivated by the above results, our purpose in this paper is to study an analogue of Equation \eqref{model-problem} involving a fourth-order elliptic operator. Namely, we assume that $\Omega\subset\R^N$ ($N\geq 5$) is an open bounded set with smooth boundary, and $\alpha$ is a positive parameter. We are interested in the following problem
\begin{align}
\tag{\protect{$P_{\nu}$}}\label{problem-P-alpha}
\begin{cases}
\Delta^2 u -\Delta u+\alpha u=|u|^{\frac{8}{N-4}}u,&\mbox{in}\ \Omega,\\
\partial_{\nu} u =\partial_{\nu}(\Delta u)= 0,&\mbox{on}\ \partial\Omega.
\end{cases}
\end{align}
The linear operator $\Delta^2 -\Delta+\alpha$ is often referred to as a  Paneitz-Branson type operator \cite{DjaHeLe00} with constant coefficients. If $(M, g)$ is a compact Riemannian manifold of dimension $N\ge 5$ and $Q_g$ is its $Q$ curvature \cite{MR832360}, the prescribed $Q$ curvature problem consists in finding metric of constant $Q$ curvature in the conformal class of $g$, see for instance \cite{Chang-Yang,Dja-Mal,Li-Xiong}. 
This amounts to finding a positive solution to
\begin{equation}
\label{eqin}
P_g(u) = |u|^{\frac{8}{N-4}},
\end{equation}
where $P_g$ is the Paneitz operator \cite{MR2393291}, i.e., 
$$
P_g u:= \Delta_g^2 u-div_g (A_g du) +h u,
$$
with
\begin{equation}
\label{paneitz}
A_g= \dfrac{(N-2)^2+4}{2(N-1)(N-2)}R_g g -\dfrac{4}{N-2}Ric_g,
\end{equation}
where $R_g$ (resp. $Ric_g$) stands for the scalar curvature (resp. Ricci curvature), and $h=\dfrac{N-4}{2}Q_g$ where $Q_g$ is the $Q$ curvature which is defined by
$$Q_g=\dfrac{1}{2(N-1)}\Delta_g R_g+\dfrac{N^3 -4N^2+16N - 16}{8(N-1)^2(N-2)^2}R_g^2-\dfrac{2}{(N-2)^2}|Ric_g|_g^2.$$
Equation \eqref{eqin} is referred to as the Paneitz-Branson equation.  In addition to the above mentioned contributions, we refer to \cite{MR2819586,BAKRI2014118,Bakri_2015,MR2083484,MR3669775,MR3085755,HangYang} and the references therein for a glance to recent results. If $(M,g)$ is Einstein ($Ric_g =\lambda g$, $\lambda\in \R$), then the Paneitz-Branson operator takes the form
\begin{equation}
\label{paneitzcons}
P_g u =\Delta^2_g u +b\Delta_g u +cu,
\end{equation}
where $b=\dfrac{N^2-2N-4}{2(N-1)}\lambda$ and $c=\dfrac{N(N-4)(N^2-4)}{16(N-1)^2}\lambda^2$, see \cite{MR2819586}. 
Observe that in the geometrical context, $b$ and $c$ in \eqref{paneitzcons} can have the same sign while we take $b=-1$ and $\alpha>0$ in \eqref{problem-P-alpha}. The case of a positive Laplacian interacting with the bi-Laplacian will be considered in a future work. 

Again, Equation \eqref{problem-P-alpha} is critical since the $L^{\frac{2N}{N-4}}$-norm scales like the $L^2$-norm of the Laplacian. The problem admits two constant solutions, namely, $u_0=0$, and $u_{1}=\alpha^{\frac{N-4}{8}}$. When the power nonlinearity is subcritical, one can prove in a standard way (adapting for instance \cite{MR849484}) that any positive solution is constant (and nonzero) when $\alpha$ is small whereas this rigidity breaks down for large $\alpha$. Our main concern is to establish a non rigidity result for \eqref{problem-P-alpha} when $\alpha$ is large. To this end, we establish some Sobolev inequalities of second order with respect to the functional space associated to variational solutions to \eqref{problem-P-alpha}. To the best of our knowledge, there are not many works in the literature dealing with the boundary conditions of \eqref{problem-P-alpha}. For a general overview of this subject we refer to the work of Berchio and Gazzola \cite{BerGaz06}, where the problem of embeddings of second order Sobolev spaces with traces on the boundary has been studied. For more insight on polyharmonic operators, we refer to \cite{GaGruSwe10}.

Our main result is stated as follows. Let 
$
H^{2}_{\nu}(\Omega)=\{u\in H^2(\Omega):\partial_{\nu} u=0\ \text{on}\ \partial\Omega\},
$
and 
\begin{equation}\label{inf-best}
 \Sigma_{\nu}(\Omega):=\inf_{u\in M_{\Omega}}J(u),
\end{equation}
where
\begin{align*}
J(u)&=\int_{\Omega}(|\Delta u|^2+|\nabla u|^2+\alpha |u|^2)\, dx,\\
\intertext{and}
 M_{\Omega}&=\left\{u\in H^{2}_{\nu}(\Omega):\int_{\Omega}|u|^{\frac{2N}{N-4}}\, dx=1\right\}.
\end{align*}
A solution of \eqref{problem-P-alpha} is said to be of least energy if its $L^{\frac{2N}{N-4}}$ normalized multiple is an optimizer for \eqref{inf-best}.

\begin{theorem}\label{main-theo-I}
Assume $\Omega$ is an open bounded subset of $\R^N$ with smooth boundary. There exists $\overline{\alpha}=\overline{\alpha}(N,|\Omega|)>0$ such that for $\alpha>\overline{\alpha}$, any least energy solution of Equation \eqref{problem-P-alpha} is nonconstant. 
\end{theorem}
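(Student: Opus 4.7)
The strategy is a Brezis--Nirenberg--type comparison of the Rayleigh quotient, adapted to the critical fourth-order Neumann setting. The unique nonzero constant in $M_{\Omega}$ (up to sign) is $u_c\equiv|\Omega|^{-(N-4)/(2N)}$, and a direct computation gives
$$J(u_c)=\alpha|\Omega|^{4/N}.$$
This is the value $\Sigma_{\nu}(\Omega)$ would take if any constant were of least energy. The plan is therefore to exhibit a test function with strictly smaller normalized energy, i.e.\ to prove that $\Sigma_{\nu}(\Omega)<\alpha|\Omega|^{4/N}$ for all $\alpha>\overline{\alpha}=\overline{\alpha}(N,|\Omega|)$. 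This strict inequality precludes $u_c$---and hence the only candidate constant solution $\alpha^{(N-4)/8}$---from being of least energy.

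For the test function I would take a suitably truncated Aubin--Talenti bubble for the critical biharmonic Sobolev embedding. Pick an interior point $x_{0}\in\Omega$, a cut-off $\eta\in C_{c}^{\infty}(\Omega)$ equal to $1$ on a neighbourhood of $x_{0}$, and set
$$\varphi_{\varepsilon}(x):=\eta(x)\,\frac{\varepsilon^{(N-4)/2}}{(\varepsilon^{2}+|x-x_{0}|^{2})^{(N-4)/2}}.$$
Since $\varphi_{\varepsilon}\in H^{2}_{0}(\Omega)\subset H^{2}_{\nu}(\Omega)$, the Neumann conditions are trivially satisfied; this is the reason for centring $\varphi_{\varepsilon}$ at an interior point rather than chasing a sharper constant by localising at the boundary. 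Standard expansions yield
$$\frac{\|\Delta\varphi_{\varepsilon}\|_{L^{2}(\Omega)}^{2}}{\|\varphi_{\varepsilon}\|_{L^{2N/(N-4)}(\Omega)}^{2}}\longrightarrow S\quad\text{as }\varepsilon\to 0,$$
where $S$ denotes the best constant for the embedding $D^{2,2}(\R^{N})\hookrightarrow L^{2N/(N-4)}(\R^{N})$. Furthermore $\|\nabla\varphi_{\varepsilon}\|_{L^{2}(\Omega)}^{2}=O(\varepsilon^{2})$ (with a logarithmic factor in dimension six), and $\|\varphi_{\varepsilon}\|_{L^{2}(\Omega)}^{2}=O(\varepsilon^{\beta(N)})$ with $\beta(N)=4$ for $N\ge 9$, $\beta(8)=4$ up to a logarithmic factor, and $\beta(N)=N-4$ for $N\in\{5,6,7\}$; in every dimension $\beta(N)>0$.

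Choosing $\varepsilon=\varepsilon(\alpha)\to 0$ slowly enough that $\alpha\,\varepsilon^{\beta(N)}\to 0$ (for instance $\varepsilon=\alpha^{-1/(2\beta(N))}$), the full Rayleigh quotient of $\varphi_{\varepsilon}$ remains bounded above by a constant $C=C(N,\Omega)$ independent of $\alpha$, so that $\Sigma_{\nu}(\Omega)\le C$ for all sufficiently large $\alpha$. Setting $\overline{\alpha}:=C|\Omega|^{-4/N}$ then yields $\Sigma_{\nu}(\Omega)<J(u_c)$, which is the desired strict inequality and completes the proof of Theorem \ref{main-theo-I}. The main technical obstacle is the precise expansion of the Rayleigh quotient of $\varphi_{\varepsilon}$, particularly the dimension-dependent estimate of $\|\varphi_{\varepsilon}\|_{L^{2}(\Omega)}^{2}$: for the low dimensions $N\in\{5,6,7,8\}$ the bubble is not globally square-integrable and the cut-off contributes at leading order in the $L^{2}$ term. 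These expansions are by now classical, though; all that is really needed is some positive power gain in $\varepsilon$, and the scaling above provides it in every admissible dimension.
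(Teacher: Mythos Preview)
Your argument is essentially correct for the theorem as literally stated, and it is genuinely simpler than the paper's route. Two small things should be fixed. First, the coupling $\varepsilon=\varepsilon(\alpha)$ is unnecessary: for each fixed $\alpha$ your expansions already give
\[
\frac{J(\varphi_\varepsilon)}{\|\varphi_\varepsilon\|_{L^{2^*}}^2}\xrightarrow[\varepsilon\to 0]{} S,
\]
so directly $\Sigma_\nu(\Omega)\le S$ for every $\alpha>0$, and then $\overline\alpha=S|\Omega|^{-4/N}$ depends only on $N$ and $|\Omega|$ as required. Second, your illustrative choice $\varepsilon=\alpha^{-1/(2\beta(N))}$ does \emph{not} make $\alpha\,\varepsilon^{\beta(N)}\to 0$ (it gives $\alpha^{1/2}\to\infty$); you would need, say, $\varepsilon=\alpha^{-2/\beta(N)}$. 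But as just noted, no coupling is needed at all.

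The paper proceeds differently and more laboriously: instead of an interior bubble it centres the bubble at a boundary point of positive mean curvature, straightens the boundary via Fermi-type coordinates, and extracts the first-order correction in $\varepsilon$ coming from the curvature. The payoff is the strictly sharper bound $\Sigma_\nu(\Omega)<S/2^{4/N}$ rather than merely $\Sigma_\nu(\Omega)\le S$. This matters because $S/2^{4/N}$ is exactly the compactness threshold of Lemma~\ref{S_alphaK}: below it the infimum $\Sigma_\nu(\Omega)$ is \emph{achieved}, so the paper obtains existence of a least energy solution as a byproduct and also a smaller explicit threshold $\overline\alpha=S/(2|\Omega|)^{4/N}$. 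Your interior bubble gives only $\Sigma_\nu(\Omega)\le S$, which does not trigger Lemma~\ref{S_alphaK}; hence your proof establishes that any least energy solution is nonconstant, but does not by itself show that one exists. For the statement of Theorem~\ref{main-theo-I} as written that is enough (the claim is vacuously true if no minimizer exists), but you should be aware that the paper's boundary construction is doing double duty.
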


\medbreak

It is worth mentioning that when \eqref{problem-P-alpha} is considered in a smooth compact Riemannian manifold, Felli, Hebey and Robert \cite{FeHeFre05} have established that for any $\Lambda>0$, there exists $\alpha_0>0$ such that for $\alpha\geq\alpha_0$, the above equation does not have a solution whose energy is smaller than $\Lambda$.

Since \eqref{problem-P-alpha} is critical, the existence of a nontrivial solution does not follow directly from standard variational methods. Moreover, the functional setting brings new difficulties in comparison to the second order conterpart \eqref{model-problem}. To overcome the lack of compactness, we follow the arguments introduced in Aubin \cite{Aubin76}, and developed in Brezis and Nirenberg \cite{BreNi83}. However, due to the boundary conditions of \eqref{problem-P-alpha}, we cannot apply the arguments from Adimurthi and Mancini \cite{AdiMan91}, Wang \cite[Theorem 3.1]{Wang91} nor those from Berchio and Gazzola \cite{BerGaz06}. As a way out, our approach consists in making a change of coordinates in such a way that part of the boundary $\partial\Omega$ will be diffeomorphic to a flat subset of $\R^N$. Roughly speaking, the idea is to straighten out the boundary and then to estimate the Rayleigh quotient by choosing suitable test functions adapted to these new coordinates. In this new coordinate system, we establish the following second order Sobolev inequality. The Sobolev constant $S$ is defined from now on by 
\begin{equation*}
S=\inf_{u\in\mathcal{D}^{2,2}(\R^N)}\left\{\int_{\R^N}|\Delta u|^2\, dx:\int_{\R^N}|u|^{\frac{2N}{N-4}}\, dx=1\right\}.
\end{equation*}

\begin{lemma}\label{sharp-H2-mu}
Assume that $\Omega$ is an open bounded subset of $\R^N$ with smooth boundary and $N\geq 5$. Then, for every $\varepsilon>0$, there exists $B(\varepsilon)>0$ such that for all $u\in H^{2}_{\nu}(\Omega)$,
\begin{equation}\label{DHL-Lemma}
\|u\|^{2}_{L^{\frac{2N}{N-4}}(\Omega)}\leq\left(\frac{2^{4/N}}{S}+\varepsilon\right)\|\Delta u\|^{2}_{L^{2}(\Omega)}+B(\varepsilon)\|u\|^{2}_{H^1(\Omega)}.
\end{equation}
Moreover, $\Sigma_{\nu}(\R^{N}_{+})=S/2^{4/N}$ and the infimum is not achieved.
\end{lemma}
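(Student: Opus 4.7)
My plan is to derive both assertions from a single reflection principle: if $v\in H^2_\nu(\R^N_+)$, then the even extension $\tilde v(x',x_N)=v(x',|x_N|)$ belongs to $H^2(\R^N)$, the $C^1$ matching across $\partial\R^N_+$ being ensured precisely by $\partial_\nu v=0$. One computes $\|\Delta\tilde v\|_{L^2}^2=2\|\Delta v\|_{L^2}^2$ and $\|\tilde v\|_{L^p}^2=2^{(N-4)/N}\|v\|_{L^p}^2$ with $p=2N/(N-4)$, so that applying the sharp Sobolev inequality on $\R^N$ to $\tilde v$ gives the half-space estimate
\[\|v\|_{L^p(\R^N_+)}^2\le (2^{4/N}/S)\,\|\Delta v\|_{L^2(\R^N_+)}^2.\]

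To transport this bound to $\Omega$, given $\eta>0$ I would fix a finite cover of $\overline{\Omega}$ by small balls $B_j$ and a subordinate partition of unity $\{\phi_j^2\}$ with $\sum_j\phi_j^2\equiv 1$ on $\overline{\Omega}$. The cutoffs are arranged so that $\partial_\nu\phi_j=0$ on $\partial\Omega$, and boundary balls are coupled with $C^2$-diffeomorphisms $\Phi_j\colon B_j\cap\Omega\to V_j\subset\R^N_+$ straightening $\partial\Omega$ in normal coordinates and satisfying $\|D\Phi_j-\Id\|_\infty\le\eta$ once $B_j$ is small. Since $p\ge 2$, the triangle inequality in $L^{p/2}(\Omega)$ gives
\[\|u\|_{L^p(\Omega)}^2=\Bigl\|\sum_j\phi_j^2 u^2\Bigr\|_{L^{p/2}(\Omega)}\le \sum_j\|\phi_j u\|_{L^p(\Omega)}^2.\]
Interior terms are extended by zero and bounded by $S^{-1}\|\Delta(\phi_j u)\|_{L^2}^2$, while boundary terms are pushed through $\Phi_j$ and then, after even reflection (legitimate because both $\partial_\nu u$ and $\partial_\nu\phi_j$ vanish on $\partial\Omega$), bounded by $(2^{4/N}/S+C\eta)\|\Delta(\phi_j u)\|_{L^2}^2$. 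Expanding $\Delta(\phi_j u)=\phi_j\Delta u+2\nabla\phi_j\cdot\nabla u+(\Delta\phi_j)u$ and applying Young's inequality with a small parameter $\delta$ yields $\|\Delta(\phi_j u)\|_{L^2}^2\le(1+\delta)\|\phi_j\Delta u\|_{L^2}^2+C_\delta\|u\|_{H^1(\Omega)}^2$; summing and using $\sum_j\phi_j^2\equiv 1$ produces \eqref{DHL-Lemma} with $\varepsilon$ absorbing $\eta$ and $\delta$.

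For $\Sigma_\nu(\R^N_+)$, the lower bound $\ge S/2^{4/N}$ is immediate from the reflection principle applied to any competitor in $M_{\R^N_+}$, since $J(u)\ge\|\Delta u\|_{L^2}^2$. For the reverse inequality I would test against truncated rescaled bubbles $\chi(x)\lambda^{(N-4)/2}(\lambda^2+|x|^2)^{-(N-4)/2}$ centered at the origin; their even symmetry in $x_N$ automatically enforces the Neumann condition, and a direct scaling computation shows that $\|\Delta\cdot\|_{L^2(\R^N_+)}^2/\|\cdot\|_{L^p(\R^N_+)}^2\to S/2^{4/N}$ while the lower-order contributions $\|\nabla\cdot\|_{L^2}^2$ and $\|\cdot\|_{L^2}^2$ vanish as $\lambda\to 0$. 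Non-achievement is then immediate: a minimizer $u\in M_{\R^N_+}$ would satisfy $S/2^{4/N}=J(u)\ge\|\Delta u\|_{L^2}^2+\int_{\R^N_+}(|\nabla u|^2+\alpha u^2)\ge S/2^{4/N}+\int_{\R^N_+}(|\nabla u|^2+\alpha u^2)$, forcing $u\equiv 0$ and contradicting $\|u\|_{L^p}=1$. The most delicate point will be the boundary-straightening stage: the $\Phi_j$ and $\phi_j$ must be chosen jointly so that the Neumann condition genuinely passes to the flattened boundary and the chart distortion of $\Delta$ costs only $O(\eta)$ in the Rayleigh quotient, because any absolute multiplicative loss would swallow the sharp constant $2^{4/N}/S$.
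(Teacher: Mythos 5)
Your proposal is correct and follows essentially the same strategy as the paper: both prove the half-space result $\Sigma_\nu(\R^N_+)=S/2^{4/N}$ via even reflection across $\{x_N=0\}$ (lower bound) and truncated bubbles (upper bound), and then transfer to $\Omega$ via a partition of unity with squared cutoffs, interior/boundary splitting, boundary-straightening charts $\Phi_j$ with $\|D\Phi_j-\Id\|_\infty$ small, and Young's inequality to absorb commutator terms into $\|u\|_{H^1}^2$. The only minor deviation is your non-achievement argument, which is slightly more direct: rather than reflecting a putative minimizer and invoking non-achievement of $\Sigma_\nu(\R^N)=S$ (the paper's route), you use the reflection-derived inequality $\|\Delta u\|_{L^2(\R^N_+)}^2\ge (S/2^{4/N})\|u\|_{L^p(\R^N_+)}^2$ to force $\|\nabla u\|_{L^2}^2+\alpha\|u\|_{L^2}^2\le 0$ and hence $u\equiv 0$; both hinge on the same reflection step. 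You also correctly flag the delicate point the paper handles via a specific renormalized partition (the $\widetilde\zeta_i^{\,5}/\sum\widetilde\zeta_j^{\,5}$ trick to keep $\zeta_i^{1/2}\in C^2$ and the compatibility of cutoffs and charts with the Neumann condition).
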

This lemma is the key to prove Theorem \ref{main-theo-I}. We believe this Sobolev inequality has its own interest and can be useful in other situations.
As in the second order case, if we focus on least energy solutions, then \eqref{problem-P-alpha} has only the constant solution $\alpha^{\frac{N-4}{8}}$ when $\alpha$ is small enough. 
\begin{theorem}\label{rigidity}
Assume $\Omega$ is an open bounded subset of $\R^N$ with smooth boundary. Then, there exists $\underline{\alpha}=\underline{\alpha}(N,|\Omega|)>0$ such that for $0<\alpha<\underline{\alpha}$, the only least energy solution of Equation \eqref{problem-P-alpha} is the constant solution $\alpha^{\frac{N-4}{8}}$.
\end{theorem}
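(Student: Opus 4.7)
The plan is to reduce the theorem to showing that, for $\alpha$ small enough, the constant $c_0:=|\Omega|^{-(N-4)/(2N)}$ is the unique positive minimizer of $J$ on $M_\Omega$. A direct substitution gives $J(c_0)=\alpha|\Omega|^{4/N}$, so $\Sigma_\nu(\Omega)\le \alpha|\Omega|^{4/N}$. Moreover, the rescaling $v\mapsto \Sigma_\nu(\Omega)^{(N-4)/8}v$, which sends a minimizer $v\in M_\Omega$ to a least energy solution of \eqref{problem-P-alpha}, transforms $c_0$ into the constant $\alpha^{(N-4)/8}$. Hence the theorem reduces to the uniqueness of $c_0$ as positive minimizer.

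I argue by contradiction: assume $\alpha_n\to 0^+$ and that there are positive minimizers $u_n\in M_\Omega$ with $u_n\not\equiv c_0$. Since H\"older's inequality yields $\|u_n\|_{L^2}^2\le|\Omega|^{4/N}$ on $M_\Omega$, the bound $J_{\alpha_n}(u_n)\le \alpha_n|\Omega|^{4/N}$ forces $\|\Delta u_n\|_{L^2}$ and $\|\nabla u_n\|_{L^2}$ to tend to $0$. Therefore $(u_n)$ converges strongly in $H^2(\Omega)$ to a constant, which by the continuous embedding $H^2(\Omega)\hookrightarrow L^{2N/(N-4)}(\Omega)$, the constraint and positivity must equal $c_0$.

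Set $\varphi_n:=u_n-c_0\to 0$ in $H^2(\Omega)$ and split $\varphi_n=a_n+\psi_n$ with $a_n:=|\Omega|^{-1}\int_\Omega\varphi_n$ and $\int_\Omega\psi_n=0$. Taylor expanding the constraint $\int_\Omega(c_0+\varphi_n)^p=c_0^p|\Omega|$, with $p:=2N/(N-4)$, gives
\begin{equation*}
p\,c_0^{p-1}\!\int_\Omega\varphi_n\;+\;\tfrac{p(p-1)}{2}\,c_0^{p-2}\|\varphi_n\|_{L^2}^2\;+\;R_n=0,
\end{equation*}
with $R_n=o(\|\varphi_n\|_{L^2}^2)$, and hence $a_n=O(\|\psi_n\|_{L^2}^2)$. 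Plugging this into
\begin{equation*}
J_{\alpha_n}(u_n)-J_{\alpha_n}(c_0)=\|\Delta\varphi_n\|_{L^2}^2+\|\nabla\varphi_n\|_{L^2}^2+2\alpha_n c_0\!\int_\Omega\varphi_n+\alpha_n\|\varphi_n\|_{L^2}^2,
\end{equation*}
the $\int\varphi_n$ contribution combines with the $\alpha_n\|\varphi_n\|_{L^2}^2$ term to produce
\begin{equation*}
J_{\alpha_n}(u_n)-J_{\alpha_n}(c_0)=\|\Delta\psi_n\|_{L^2}^2+\|\nabla\psi_n\|_{L^2}^2-\tfrac{8\alpha_n}{N-4}\|\psi_n\|_{L^2}^2+o\bigl(\alpha_n\|\psi_n\|_{L^2}^2\bigr).
\end{equation*}
The Neumann--Poincar\'e inequality $\|\nabla\psi_n\|_{L^2}^2\ge \lambda_1(\Omega)\|\psi_n\|_{L^2}^2$ (valid since $\int_\Omega\psi_n=0$) then makes the right-hand side strictly positive once $\alpha<\underline{\alpha}:=(N-4)\lambda_1(\Omega)/8$, unless $\psi_n\equiv 0$; in that case $u_n=c_0+a_n$ is constant and the constraint forces $a_n=0$. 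Both alternatives contradict $u_n\not\equiv c_0$.

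The main obstacle is the uniform control of the remainder $R_n$: the nonlinearity $t\mapsto(c_0+t)^p$ is only locally $C^3$, so one must split $\Omega$ into $\{|\varphi_n|\le c_0/2\}$, on which a pointwise cubic bound together with Sobolev interpolation yield an $o(\|\varphi_n\|_{L^2}^2)$ contribution, and its complement, where Chebyshev's inequality combined with the strong $L^p$-convergence $\varphi_n\to 0$ (a consequence of $\varphi_n\to 0$ in $H^2$ and the Sobolev embedding) makes the contribution negligible compared to $\|\varphi_n\|_{L^2}^2$.
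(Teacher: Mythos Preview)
Your approach is genuinely different from the paper's and, in outline, correct. The paper argues indirectly: it first shows (via the factorization $\Delta^2-\Delta+\alpha=(-\Delta+\tfrac12)^2+(\alpha-\tfrac14)$ and the maximum principle) that minimizers are positive, then rescales them to solutions $v_k$ of the equation and runs a Gidas--Spruck blow-up argument to obtain a uniform $L^\infty$ bound as $\alpha\to0$; elliptic bootstrap then gives $v_k\to0$ uniformly, and the Ni--Takagi/Poincar\'e argument finishes. You bypass the blow-up analysis entirely by working directly with the Rayleigh quotient: the bound $J_{\alpha_n}(u_n)\le\alpha_n|\Omega|^{4/N}$ already forces $u_n\to c_0$ in $H^2$, and a second-variation expansion of $J$ at $c_0$ under the constraint shows $c_0$ is a strict local $H^2$-minimizer once $\alpha<(N-4)\lambda_1(\Omega)/8$. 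This is more elementary and yields an explicit threshold (which, however, depends on $\Omega$ through $\lambda_1$, not only on $|\Omega|$ as the statement suggests).

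Two points need care. First, your claim $R_n=o(\|\varphi_n\|_{L^2}^2)$ is not justified by the sketched argument: on the good set you only get $\int|\varphi_n|^3$, and interpolation between $L^2$ and $L^{2^*}$ does \emph{not} control this by $o(\|\varphi_n\|_{L^2}^2)$ when the ratio $\|\varphi_n\|_{H^2}/\|\varphi_n\|_{L^2}$ is unbounded (nothing rules this out a~priori). What one can prove is $|R_n|\le C\|\varphi_n\|_{H^2}^{\min(3,2^*)}$, so the error in the energy expansion is $o(\|\psi_n\|_{H^2}^2)$ rather than $o(\alpha_n\|\psi_n\|_{L^2}^2)$. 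This is still sufficient: absorb it into $\|\Delta\psi_n\|^2+\|\nabla\psi_n\|^2$ (equivalent to $\|\psi_n\|_{H^2}^2$ for mean-zero $\psi_n\in H^2_\nu$), and Poincar\'e still makes the expression strictly positive for small $\alpha_n$. Second, you reduce to \emph{positive} minimizers without justification; since $|u|\notin H^2$ in general, positivity is nontrivial in the fourth-order setting and is precisely the content of the paper's comparison argument. Your expansion can in fact be carried out for $|c_0+\varphi_n|^{p}$ with the same good-set/bad-set splitting, so the positivity hypothesis is removable with minor additional bookkeeping.
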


A natural question arises from Theorem \ref{main-theo-I} and Theorem \ref{rigidity}: where does rigidity of the minimizer break down? A tempting conjecture is that the rigidity is lost when the constant solution loses its stability.  However this is still open even for the second order equation \eqref{P-Lin-Ni}, see for instance \cite{MR3564729,Dolbeault_2017}.
 
The manuscript is organized as follows. In Section \ref{Prelim-4}, we settle the functional setting and recall some known facts regarding best constants for embeddings of second order Sobolev spaces. In Section \ref{sec-const-4}, we establish a relation between the best constant for the second order Sobolev embedding and that of the functional space associated to \eqref{problem-P-alpha}. In Section \ref{sec-estimates-4}, by taking into account the smoothness of the boundary $\partial\Omega$ and the effect of the principal curvatures, we establish some asymptotic estimates, and  we give the proof of Theorem \ref{main-theo-I}. In Section \ref{Section-2nd-Sob-ineq}, we establish some Sobolev inequalities of second order. Section \ref{minimizers} contains the proof of the rigidity theorem for small $\alpha$. 
In forthcoming works, we will consider the counterpart of Lin-Ni's conjecture for small $\alpha$ and study the critical dimensions.

\section{Preliminaries}\label{Prelim-4}
In this section we settle the functional setting regarding \eqref{problem-P-alpha}, and recall some known facts about the best constants of some second-order Sobolev embeddings.

A classical result in the theory of Sobolev spaces claims that if $\Omega\not\equiv\R^N$ is a smooth domain, then any function in $H^2(\Omega)$ admits some traces on the boundary $\partial\Omega$, see, for instance, \cite[Theorem 7.53]{Adams75}, or \cite[Lemmas 16.1 \& 16.2]{Tartar07}. In particular, there exists a linear continuous operator
\begin{equation*}
\Tr:H^2(\Omega)\to H^{3/2}(\partial\Omega)
\end{equation*}
such that $\Tr u=\partial_{\nu} u\big{|}_{\partial\Omega}$ for all $u\in C^1(\overline{\Omega})$. In the sense of traces, the kernel of the operator $\Tr$ gives rise to the following proper subspace of $H^2(\Omega)$,
\begin{equation*}
H^{2}_{\nu}(\Omega):=\{u\in H^2(\Omega):\partial_{\nu} u=0\ \text{on}\ \partial\Omega\}.
\end{equation*}
We recall that $H^2(\Omega)$ is a Hilbert space endowed with the inner product defined through
\begin{equation*}
\langle u,v \rangle=\int_{\Omega}(D^2 u D^2 v+Du Dv+uv)\, dx\hspace{.3cm}\text{for all}\ u,v\in H^2(\Omega).
\end{equation*}
Using regularity theory, see, for instance, \cite[Theorem 8.12]{GilTru98}, or \cite[Chapter 1, Section 6, Theorem 4]{Krylov08}, we infer that 
\begin{equation*}
(u,u)\mapsto \|u\|_{H^{2}_{\nu}(\Omega)}:=\left(\int_{\Omega}(|\Delta u|^2+|\nabla u|^2 +\alpha |u|^2)\, dx\right)^{1/2}
\end{equation*}
defines an equivalent norm in $H^{2}_{\nu}(\Omega)$ when $\alpha>0$.

Note that by integration by parts, $H^{2}_{\nu}(\Omega)$ is the natural space for (weak) solutions to \eqref{problem-P-alpha}. To obtain nontrivial least energy solutions to \eqref{problem-P-alpha}, we  consider the minimization problem \eqref{inf-best}.

Before proceeding any further, we establish some notations and recall some known results. Denote by $\mathcal{D}^{2,2}(\R^N)$ the closure of the space of smooth compactly supported functions in $\R^N$ with respect to the norm $\|D^{2}\cdot\|_{L^2(\R^N)}$. Note that integration by parts two times together with a density argument show that $\|D^2 \phi\|_{L^{2}(\R^N)}=\|\Delta \phi\|_{L^{2}(\R^N)}$ for all $\phi\in\mathcal{D}^{2,2}(\R^N)$. It is well-known that the best constant for the embedding of $\mathcal{D}^{2,2}(\R^N)$ into $L^{\frac{2N}{N-4}}(\R^N)$ might be characterized by
\begin{equation}\label{S-inf-def}
S:=\inf_{u\in\mathcal{D}^{2,2}(\R^N)}\left\{\int_{\R^N}|\Delta u|^2\, dx:\int_{\R^N}|u|^{\frac{2N}{N-4}}\, dx=1\right\}.
\end{equation}
We recall that Lieb \cite[Section IV]{Lieb83}, and Lions \cite[Theorem I.1]{Lions85} (see also \cite{MR705681}) have proved that there exists a minimizer for \eqref{S-inf-def}, which is uniquely determined up to translations and dilations. Namely, the minimizer is given by the one-parameter family 
\begin{equation}\label{U-epsi}
 u_{\varepsilon}(x):=\gamma_N\frac{\varepsilon^{\frac{N-4}{2}}}{(\varepsilon^2+|x|^2)^{\frac{N-4}{2}}},
\end{equation}
where
\begin{equation}\label{Gamma-def-opt}
 \gamma_N:=\left[(N-4)(N-2)N(N+2)\right]^{\frac{N-4}{8}}.
\end{equation}
With the above expression, the constant $S$ can be evaluated explicitly
\begin{equation*}
S=\pi^{2}(N-4)(N-2)N(N+2)\left(\frac{\Gamma(\frac{N}{2})}{\Gamma(N)}\right)^{\frac{4}{N}}.
\end{equation*}
Note that $u_{\varepsilon}(x)=\varepsilon^{-\frac{N-4}{2}}u_1(\frac{x}{\varepsilon})$, and $u_{\varepsilon}$ satisfies the equation
\begin{equation}\label{u-satisfy-equal}
\Delta^2 u_{\varepsilon}=u_{\varepsilon}|u_{\varepsilon}|^{\frac{8}{N-4}}\hspace{.3cm}\text{in}\hspace{.3cm}\R^N.
\end{equation}
In fact, all positive solutions of the above equation are given by the ${\varepsilon}$- family \eqref{U-epsi}. In regard to this result, see, for instance, \cite[Theorem 2.1]{EdForJan90}, \cite[Theorem 1.3]{Lin98}, and \cite[Theorem 1.3]{WeiXu99}. 
 
Now we recall that for any smooth bounded domain $\Omega\subset\R^N$, $H^{2}_{0}(\Omega)$ and $H^2\cap H^{1}_{0}(\Omega)$ are Hilbert spaces endowed with the equivalent norm defined by
\begin{equation}\label{Delta-norm}
u\mapsto\left(\int_{\Omega}|\Delta u|^2\, dx\right)^{1/2},
\end{equation}
see, for instance, \cite[Theorem 2.31]{GaGruSwe10}. Here we note that $H^2\cap H^{1}_{0}(\Omega)$ is the space where variational solutions to fourth-order elliptic PDEs are sought when complemented with the so-called homogenous Navier boundary conditions along the boundary, $u=\Delta u=0$ on $\partial\Omega$, while $H^{2}_{0}(\Omega)$ is the functional space for variational solutions when Dirichlet boundary conditions are considered, $u=\partial_{\nu}u=0$ on $\partial\Omega$. Observe that $H^2\cap H^{1}_{0}(\Omega)$ strictly contains $H^{2}_{0}(\Omega)$.

The question whether or not the best constants for the embeddings of $H^{2}_{0}(\Omega)$ and $H^2\cap H^{1}_{0}(\Omega)$ into $L^{\frac{2N}{N-4}}(\Omega)$ are equal, and independent of the domain, was investigated by van der Vorst \cite[Theorems 1 and 2]{Vorst93}. He has shown that for any smooth domain $\Omega\subset\R^N$,
\begin{align*}
S&=\inf_{u\in H^{2}_{0}(\Omega)}\left\{\int_{\Omega}|\Delta u|^2\, dx:\int_{\Omega}|u|^{\frac{2N}{N-4}}\, dx=1\right\}\\
&=\inf_{u\in H^{2}\cap H^{1}_{0}(\Omega)}\left\{\int_{\Omega}|\Delta u|^2\, dx:\int_{\Omega}|u|^{\frac{2N}{N-4}}\, dx=1\right\}
\end{align*}
and that the infimum is never achieved when $\Omega$ is bounded. However, a crucial part of the proof is not carried out in full detail. In addition, it is not clear that \cite[Lemma A1]{Vorst93} can be proved using an extension argument. In regard to this result, we refer to \cite[Theorem 1]{GaGruSwe-2010}.

In contrast with the above results, we cannot expect to obtain the same conclusions with respect to the space $H^{2}_{\nu}(\Omega)$ since \eqref{Delta-norm} is no longer a norm in $H^{2}_{\nu}(\Omega)$. 

\section{A relation between $\Sigma_{\nu}(\R^N)$ and $S$}\label{sec-const-4}
In this section we show that $\Sigma_{\nu}(\R^N)$ and $S$ are equal. For convenience, throughout the rest of this paper we denote
\begin{equation*}
2^{*}=\frac{2N}{N-4}.
\end{equation*}
Recall that as a consequence of the density of the space of smooth compactly supported functions in $\R^N$ with respect to the $H^2$-Sobolev norm,
\begin{equation*}
H^{2}_{0}(\R^N)=H^2\cap H^{1}_{0}(\R^N)=H^{2}_{\nu}(\R^N)=H^{2}(\R^N).
\end{equation*}

Our next result is inspired by \cite[Theorem 1(i)]{BerGaz06}.
\begin{lemma}\label{Sigma=S-Rn}
Assume $N\geq 5$ and let $S$ be defined as in \eqref{S-inf-def}. Then, for any $\alpha>0$,
\begin{equation*}
\Sigma_{\nu}(\R^N)=S,\hspace{.3cm}\text{and}\hspace{.3cm}\text{the infimum is never achieved}.
\end{equation*}
\end{lemma}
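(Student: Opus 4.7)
The plan is to prove the two inequalities $\Sigma_\nu(\mathbb{R}^N) \ge S$ and $\Sigma_\nu(\mathbb{R}^N) \le S$ separately, and then handle non-attainment by a pinching argument that combines both inequalities.

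For the lower bound, I would use the trivial inclusion $H^{2}_{\nu}(\mathbb{R}^N) = H^2(\mathbb{R}^N) \subset \mathcal{D}^{2,2}(\mathbb{R}^N)$. For any $u \in M_{\mathbb{R}^N}$, the non-negativity of the subcritical terms gives
\begin{equation*}
J(u) = \int_{\mathbb{R}^N}\bigl(|\Delta u|^2 + |\nabla u|^2 + \alpha |u|^2\bigr)\, dx \ \ge\ \int_{\mathbb{R}^N}|\Delta u|^2\, dx \ \ge\ S,
\end{equation*}
by the very definition \eqref{S-inf-def} of $S$. Taking the infimum over $M_{\mathbb{R}^N}$ yields $\Sigma_\nu(\mathbb{R}^N) \ge S$.

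For the upper bound, I would exploit the scaling invariance of the critical Sobolev quotient together with the fact that the lower-order terms scale with a negative power of the dilation parameter. Fix $\eta > 0$. By density of $C_c^\infty(\mathbb{R}^N)$ in $\mathcal{D}^{2,2}(\mathbb{R}^N)$, I can pick $\varphi \in C_c^\infty(\mathbb{R}^N)$ with $\int |\varphi|^{2^\ast}\,dx = 1$ and $\int |\Delta \varphi|^2\,dx \le S + \eta$. For $\lambda > 0$, define $\varphi_\lambda(x) := \lambda^{(N-4)/2}\varphi(\lambda x)$. A straightforward change of variables shows $\int |\varphi_\lambda|^{2^\ast}\,dx = 1$, $\int |\Delta \varphi_\lambda|^2\,dx = \int |\Delta \varphi|^2\,dx$, while
\begin{equation*}
\int_{\mathbb{R}^N} |\nabla \varphi_\lambda|^2\,dx = \lambda^{-2}\int_{\mathbb{R}^N} |\nabla \varphi|^2\,dx, \qquad \int_{\mathbb{R}^N} |\varphi_\lambda|^2\,dx = \lambda^{-4}\int_{\mathbb{R}^N} |\varphi|^2\,dx.
\end{equation*}
Since $\varphi_\lambda \in H^2_\nu(\mathbb{R}^N)$ (there is no boundary), we obtain $\Sigma_\nu(\mathbb{R}^N) \le J(\varphi_\lambda) \to \int |\Delta \varphi|^2\,dx \le S + \eta$ as $\lambda \to \infty$. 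Letting $\eta \to 0$ gives $\Sigma_\nu(\mathbb{R}^N) \le S$.

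For non-attainment, suppose towards contradiction that $u \in M_{\mathbb{R}^N}$ achieves $\Sigma_\nu(\mathbb{R}^N) = S$. The chain of inequalities in the lower-bound argument must then be equalities, so $\int |\nabla u|^2\,dx = 0$ and $\int |u|^2\,dx = 0$, whence $u \equiv 0$, contradicting $\int |u|^{2^\ast}\,dx = 1$. I do not anticipate any real obstacle here; the only subtle point is ensuring that the chosen test function $\varphi$ genuinely belongs to $H^2_\nu(\mathbb{R}^N)$, which is immediate since $\mathbb{R}^N$ has no boundary and $H^2_\nu(\mathbb{R}^N) = H^2(\mathbb{R}^N)$. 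The genuine difficulty of the analogous statement for bounded domains (reflected in the second half of Lemma~\ref{sharp-H2-mu}) will only appear in later sections when the boundary is present.
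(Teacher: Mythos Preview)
Your proof is correct but differs from the paper's in two of the three steps. For the upper bound, the paper builds an explicit test function $z_\varepsilon$ by truncating the extremal $u_\varepsilon$ and gluing in a cubic polynomial so that the result lies in $H^2(\mathbb{R}^N)$ with compact support, then computes the energy expansions directly; you instead take any near-optimizer $\varphi\in C_c^\infty(\mathbb{R}^N)$ for $S$ and rescale it, which is cleaner and avoids all the explicit estimates. For non-attainment, the paper uses essentially your scaling argument (applied to a hypothetical minimizer $u$ rather than to a test function) to force $\int|\nabla u|^2+\alpha\int|u|^2\le 0$, whereas you reach the same contradiction by a pinching argument on the chain of inequalities in the lower bound. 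Your route is more elementary; the paper's explicit $z_\varepsilon$, however, is not wasted work---it is reused verbatim in Step~one of the proof of Lemma~\ref{sharp-H2-mu} to show $\Sigma_\nu(\mathbb{R}^N_+)\le S/2^{4/N}$, where a radial $H^2$ function with compact support is exactly what is needed to restrict to the half-space.
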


\begin{proof}
We begin by noticing that
\begin{align}\label{S-leq-S_alpha}
S&=\inf_{u\in\mathcal{D}^{2,2}(\R^N)}\left\{\int_{\R^N}|\Delta u |^{2}\, dx:\int_{\R^N}|u|^{2^{*}}\, dx=1\right\}\nonumber\\[.5em]
&\leq\inf_{u\in H^2(\R^N)}\left\{\int_{\R^N}|\Delta u |^{2}\, dx:\int_{\R^N}|u|^{2^{*}}\, dx=1\right\}\nonumber\\[.5em]
&\leq\inf_{u\in H^2(\R^N)}\left\{\int_{\R^N}(|\Delta u |^{2}+|\nabla u|^2+\alpha |u|^2)\, dx:\int_{\R^N}|u|^{2^{*}}\, dx=1\right\}&\nonumber\\[.5em]
&=\Sigma_{\nu}(\R^N), 
\end{align}
where in the first inequality we have used the fact that $H^2(\R^N)\subset\mathcal{D}^{2,2}(\R^N)$. Now, in order to show the reverse inequality in \eqref{S-leq-S_alpha} we proceed as follows.

\indent\emph{Step one: For all $N\geq 5$, there holds $\Sigma_{\nu}(\R^N)\leq S$.} 
We construct a suitable minimizing sequence for which $\Sigma_{\nu}(\R^N)\leq S$. For convenience we write $|x|=r$. For all $\varepsilon>0$, we consider the function
 \begin{align*}
 \vartheta_{\varepsilon}(r)&:=u_{\varepsilon}(r)-u_{\varepsilon}(1)\\
 &=\gamma_{N}\varepsilon^{\frac{N-4}{2}}\left(\frac{1}{(\varepsilon^2+r^2)^{\frac{N-4}{2}}}-\frac{1}{(\varepsilon^2+1)^{\frac{N-4}{2}}}\right).
 \end{align*}
Now we set
 \begin{align}\label{z-epsi-sharp-half}
z_{\varepsilon}(r)=\begin{cases}
   \vartheta_{\varepsilon}(r),&\text{if}\ 0< r\leq 1/2\\
    w_{\varepsilon}(r),&\text{if}\ 1/2\leq r \leq 1\\
    0, &\text{if}\ r\geq 1,
   \end{cases}
\end{align}
 where $w_{\varepsilon}(r):=a(\varepsilon)(r-1)^3+b(\varepsilon)(r-1)^2$, with $a(\varepsilon)$, and $b(\varepsilon)$ chosen in such a way that for $r_0=1/2$, 
 \begin{equation*}
w_{\varepsilon}(r_0)=\vartheta_{\varepsilon}(r_0),\hspace{.3cm}\text{and}\hspace{.3cm}\partial_{r}w_{\varepsilon}(r_0)=\partial_{r}\vartheta_{\varepsilon}(r_0).
 \end{equation*}
 In particular, 
 \begin{equation}\label{a-b-epsilon}
 a(\varepsilon)=O(\varepsilon^{\frac{N-4}{2}}),\hspace{.3cm}\text{and}\hspace{.3cm}b(\varepsilon)=O(\varepsilon^{\frac{N-4}{2}}).
 \end{equation}
In this way, for every $N\geq 5$, since $z_{\varepsilon}$ is a $C^1$ gluing, $w_{\varepsilon}(1)=0$, and $\partial_{r}w_{\varepsilon}(1)=0$, we infer that $z_{\varepsilon}\in H^2(\R^N)$. 

Next, we seek an upper bound for the functional $J$ evaluated at $z_{\varepsilon}/ \|z_{\varepsilon}\|_{L^{2^{*}}(\R^N)}$. Indeed, arguing as in \cite[(7.58)]{GaGruSwe10}, 
\begin{equation}\label{laplacian-o}
 \int_{|x|\leq 1/2}|\Delta u_{\varepsilon}|^2\, dx=S^{N/4}+O(\varepsilon^{N-4}).
 \end{equation}
From this together with \eqref{a-b-epsilon},
\begin{align}
 \int_{\R^N}|\Delta z_{\varepsilon}(|x|)|^2\, dx &=\int_{|x|\leq 1/2}|\Delta \vartheta_{\varepsilon}(|x|)|^2\, dx + \int_{1/2\leq |x|\leq 1}|\Delta w_{\varepsilon}(|x|)|^2\, dx\nonumber\\[.4em]
 &=\int_{|x|\leq 1/2}|\Delta u_{\varepsilon}(|x|)|^2\, dx +o(1)\nonumber\\[.4em]
 &=S^{N/4}+o(1).\label{estimate-lapla-z}
 \end{align}
Similarly, by \eqref{a-b-epsilon},
\begin{equation}\label{L2-critical-L2}
\int_{\R^N}|z_{\varepsilon}(|x|)|^{2^{*}}\, dx=S^{N/4}+o(1),\hspace{.3cm}\text{and}\hspace{.3cm}\int_{\R^N}|z_{\varepsilon}(|x|)|^{2}\, dx=o(1).
\end{equation}
Since $z_{\varepsilon}\in H^2(\R^N)$ we use interpolation between $\|\Delta z_{\varepsilon}\|_{L^2(\R^N)}$ and $\|z_{\varepsilon}\|_{L^2(\R^N)}$ to get an estimate for the $L^2$-norm of $\nabla z_{\varepsilon}$. Namely, by \eqref{estimate-lapla-z} and \eqref{L2-critical-L2},
\begin{equation}\label{estimate-grad-z}
\int_{\R^N}|\nabla z_{\varepsilon}(|x|)|^2\, dx=o(1).
\end{equation}
Finally, we set $Z_{\varepsilon}:=z_{\varepsilon}/\|z_{\varepsilon}\|_{L^{2^{*}}(\R^N)}$ so that $Z_{\varepsilon}\in M_{\R^N}$. Therefore, in view of \eqref{estimate-lapla-z}-\eqref{estimate-grad-z},
\begin{align*}
\Sigma_{\nu}(\R^N)&\leq J(Z_{\varepsilon}),\hspace{.3cm}\text{for all}\ \varepsilon>0,\\
&=\frac{S^{N/4}+o(1)}{(S^{N/4}+o(1))^{\frac{N-4}{N}}}\hspace{.3cm}\text{as}\hspace{.3cm}\varepsilon\to 0. 
\end{align*}
This proves that $\Sigma_{\nu}(\R^N)\leq S$, and hence the first part of the lemma follows.

\indent\emph{Step two: $\Sigma_{\nu}(\R^N)$ is never achieved.} Seeking a contradiction, we assume that there exists a function $u\in M_{\R^N}$ which achieves equality in \eqref{inf-best}. Define for $\lambda>0$ the rescaled function $u_{\lambda}(x):=\lambda^{\frac{N-4}{2}}u(\lambda x)$ so that $\|u_{\lambda}\|^{2^{*}}_{L^{2^{*}}(\R^N)}=1$. Thus,
\begin{align*}
\int_{\R^N}(|\Delta u|^2+|\nabla u|^2+\alpha |u|^2)\, dx&\leq\int_{\R^N}(|\Delta u_{\lambda}|^2+|\nabla u_{\lambda}|^2+\alpha|u_{\lambda}|^2)\, dx\\
&\leq\int_{\R^N}|\Delta u|^2\, dx+\frac{1}{\lambda^2}\int_{\R^N}|\nabla u|^2\, dx+\frac{\alpha}{\lambda^4}\int_{\R^N}|u|^2\, dx.
\end{align*}
By sending $\lambda$ to infinity in the above inequality we obtain a contradiction.
\end{proof}

Now we recall that by the Sobolev embedding theorem, there exist positive constants $A$ and $B$ such that for any $u\in H^2(\Omega)$,
\begin{equation*}
\|u\|^{2}_{L^{2^{*}}(\Omega)}\leq A \|\Delta u\|^{2}_{L^2(\Omega)}+B\|u\|^{2}_{H^1(\Omega)}.
\end{equation*}
The task of finding the best constants in the above inequality has been extensively studied in the last years. In this regard, we refer to \cite{DjaHeLe00,He03}. In this direction, we will prove that 
for every $\varepsilon>0$, there exists $B(\varepsilon)>0$ such that for all $u\in H^{2}_{\nu}(\Omega)$,
\begin{equation}\label{DHL-Lemma-bis}
\|u\|^{2}_{L^{2^{*}}(\Omega)}\leq\left(\frac{2^{4/N}}{S}+\varepsilon\right)\|\Delta u\|^{2}_{L^{2}(\Omega)}+B(\varepsilon)\|u\|^{2}_{H^1(\Omega)}.
\end{equation}
Moreover, $\Sigma_{\nu}(\R^{N}_{+})=S/2^{4/N}$, and the infimum is not achieved.
This is the content of Lemma \ref{sharp-H2-mu} proved in Section \ref{Section-2nd-Sob-ineq}. 
As a consequence of inequality \eqref{DHL-Lemma-bis}, we establish the following result.


\begin{lemma}\label{S_alphaK}
Assume that $\Omega$ is an open bounded subset of $\R^N$ with smooth boundary and $N\geq 5$. If $\Sigma_{\nu}(\Omega)<S/{2^{4/N}}$, then the infimum in \eqref{inf-best} is achieved. 
\end{lemma}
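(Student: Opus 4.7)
The plan is to run the standard concentration-compactness dichotomy of Brezis--Nirenberg type, with the sharp Sobolev inequality \eqref{DHL-Lemma-bis} of Lemma \ref{sharp-H2-mu} playing the role of the rigid bookkeeping tool that converts an $L^{2^{*}}$ mass loss into a lower bound on the $\|\Delta\cdot\|_{L^{2}}^{2}$ term. Take a minimizing sequence $(u_{n})\subset M_{\Omega}$ for $J$. Because $J(u_{n})\to\Sigma_{\nu}(\Omega)$ and $J$ is exactly $\|\cdot\|_{H^{2}_{\nu}(\Omega)}^{2}$, the sequence is bounded in $H^{2}_{\nu}(\Omega)$; after extracting a subsequence, $u_{n}\rightharpoonup u$ in $H^{2}_{\nu}(\Omega)$, $u_{n}\to u$ strongly in $H^{1}(\Omega)$ (by Rellich compactness) and a.e. in $\Omega$. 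The candidate minimizer is this $u$, and the issue is only to rule out $L^{2^{*}}$ mass escaping to concentration.

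Set $v_{n}:=u_{n}-u$, so $v_{n}\rightharpoonup 0$ in $H^{2}_{\nu}(\Omega)$, $\|v_{n}\|_{H^{1}(\Omega)}\to 0$, and $v_{n}\to 0$ a.e. Put $A:=\int_{\Omega}|u|^{2^{*}}dx$ and, up to subsequence, $B:=\lim_{n}\int_{\Omega}|v_{n}|^{2^{*}}dx$. The Brezis--Lieb lemma and the orthogonal decomposition of the Laplacian in $L^{2}$ give
\begin{equation*}
A+B=1,\qquad J(u_{n})=J(u)+\|\Delta v_{n}\|_{L^{2}(\Omega)}^{2}+o(1).
\end{equation*}
Apply \eqref{DHL-Lemma-bis} to $v_{n}$, use $\|v_{n}\|_{H^{1}(\Omega)}\to 0$, and let $\varepsilon\downarrow 0$ after $n\to\infty$ to obtain $\liminf_{n}\|\Delta v_{n}\|_{L^{2}(\Omega)}^{2}\ge (S/2^{4/N})\,B^{2/2^{*}}$. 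On the other hand, for $A>0$ the rescaled function $u/A^{1/2^{*}}$ lies in $M_{\Omega}$, so $J(u)\ge \Sigma_{\nu}(\Omega)\,A^{2/2^{*}}$, while for $A=0$ this is trivial. Putting everything together,
\begin{equation*}
\Sigma_{\nu}(\Omega)\ge \Sigma_{\nu}(\Omega)\,A^{2/2^{*}}+\frac{S}{2^{4/N}}\,B^{2/2^{*}}.
\end{equation*}

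The hypothesis $\Sigma_{\nu}(\Omega)<S/2^{4/N}$ now closes the argument by convexity of the map $t\mapsto t^{2/2^{*}}$ being reversed (since $2/2^{*}=(N-4)/N<1$): for $A,B\ge 0$ with $A+B=1$, one has $A^{2/2^{*}}+B^{2/2^{*}}\ge 1$ with equality iff $A=0$ or $B=0$. If $B>0$ we would get $\Sigma_{\nu}(\Omega)>\Sigma_{\nu}(\Omega)(A^{2/2^{*}}+B^{2/2^{*}})\ge \Sigma_{\nu}(\Omega)$, a contradiction (note $\Sigma_{\nu}(\Omega)>0$ since $\alpha>0$). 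If instead $A=0$, then $B=1$ forces $\Sigma_{\nu}(\Omega)\ge S/2^{4/N}$, again contradicting the hypothesis. Hence $B=0$ and $A=1$, so $u\in M_{\Omega}$; weak lower semicontinuity of $J$ gives $J(u)\le\Sigma_{\nu}(\Omega)$, so $u$ is a minimizer.

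The main conceptual point, and the only nontrivial input, is the sharp constant $2^{4/N}/S$ in \eqref{DHL-Lemma-bis}: its appearance in the threshold $S/2^{4/N}$ reflects the fact that the worst possible loss of compactness for $H^{2}_{\nu}(\Omega)$ comes from a bubble concentrating at a smooth boundary point, where only half of the mass of an $\R^{N}$ bubble survives. Once \eqref{DHL-Lemma-bis} is granted, the argument above is mechanical; the only check that needs a little care is the Brezis--Lieb step together with $\|\nabla v_{n}\|_{L^{2}(\Omega)}+\|v_{n}\|_{L^{2}(\Omega)}\to 0$ to justify the clean splitting of $J(u_{n})$ into $J(u)+\|\Delta v_{n}\|_{L^{2}}^{2}+o(1)$.
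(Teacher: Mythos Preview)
Your proof is correct and follows essentially the same approach as the paper: both take a minimizing sequence, use Rellich compactness for the $H^{1}$ part, split the $L^{2^{*}}$ mass via Brezis--Lieb, control the defect $\|\Delta v_{n}\|_{L^{2}}^{2}$ through the sharp inequality \eqref{DHL-Lemma}, and close with the subadditivity $a^{2/2^{*}}+b^{2/2^{*}}\ge(a+b)^{2/2^{*}}$. The paper organizes this into three steps (first ruling out $u\equiv 0$, then proving strong $L^{2^{*}}$ convergence, then strong $H^{2}_{\nu}$ convergence), whereas you fold the first two into a single dichotomy on $(A,B)$, but the ingredients and logic are identical.
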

\begin{proof}
Let $(u_k)_{k\in\N}\subset M_{\Omega}$ be a minimizing sequence for $\Sigma_{\nu}(\Omega)$. Since $J$ is the square of a norm in $H^{2}_{\nu}(\Omega)$ we deduce that the sequence $(u_k)_{k\in\N}$ is bounded in $H^{2}_{\nu}(\Omega)$. Consequently, up to extracting a subsequence, there exists $u\in H^{2}_{\nu}(\Omega)$ with
 \begin{align}\label{Sobolev-propertiesI}
   \begin{cases}
     u_k\rightharpoonup u & \text{weakly in}\ H^{2}_{\nu}(\Omega)\\
     u_k\rightharpoonup u & \text{weakly in}\ L^{2^{*}}(\Omega)\\
     u_k\to u & \text{strongly in}\ H^1(\Omega)\\
     u_k(x)\to u(x) & \text{a.e. in}\ \Omega.
   \end{cases}
\end{align}
 
\indent\emph{Step one: There holds $u\not\equiv 0$.} Seeking a contradiction, we assume that $u\equiv 0$. By \eqref{Sobolev-propertiesI},
 \begin{equation}\label{H1-convergence}
  u_k\to 0\hspace{.3cm}\text{strongly in}\ H^1(\Omega).
 \end{equation}
Recall that $\|u_k\|_{L^{2^{*}}(\Omega)}=1$. Thus,  
\begin{align*}
 \lim_{k\to\infty}\int_{\Omega}|\Delta u_k|^2\, dx&\leq \lim_{k\to\infty}J(u_k)\\
 &= \Sigma_{\nu}(\Omega)\\
 &\leq \Sigma_{\nu}(\Omega)\left(\frac{2^{4/N}}{S}+\varepsilon\right)\int_{\Omega}|\Delta u_k|^2\, dx + o(1)\hspace{.5cm}(\ \text{by \eqref{DHL-Lemma}} )
\end{align*}
for every $\varepsilon>0$. Note that $\Sigma_{\nu}(\Omega)>0$. Hence, as a consequence of the above inequality,
\begin{equation*}
 1\leq \Sigma_{\nu}(\Omega)\left(\frac{2^{4/N}}{S}+\varepsilon\right),
\end{equation*}
which contradicts our assumption $\Sigma_{\nu}(\Omega)<S/{2^{4/N}}$. Therefore, we conclude that $u\not\equiv 0$.
 
\indent\emph{Step two: Strong convergence in $L^{2^{*}}(\Omega)$.} By Vitali theorem, 
 \begin{align*}
 \int_{\Omega}|u_k|^{2^{*}}\, dx-\int_{\Omega}|u_k-u|^{2^{*}}\, dx &=-\int_{\Omega}\int^{1}_{0}\frac{d}{dt}|u_k-tu|^{2^{*}}\, dt\, dx \nonumber\\
 &=2^{*}\int_{\Omega}\int^{1}_{0}u(u_k-tu)|u_k-tu|^{2^{*}-2}\, dt\, dx \nonumber\\
 &=2^{*}\int_{\Omega}\int^{1}_{0}u(u-tu)|u-tu|^{2^{*}-2}\, dt\, dx+o(1)\nonumber\\
 &=\int_{\Omega}|u|^{2^{*}}\, dx+o(1).
 \end{align*}
Since $(u_k)_{k\in\N}\subset M_{\Omega}$,
\begin{equation}\label{vitali-2}
1- \int_{\Omega}|u_k-u|^{2^{*}}\, dx= \int_{\Omega}|u|^{2^{*}}\, dx+o(1).
\end{equation} 
By weak convergence in $H^{2}_{\nu}(\Omega)$,
 \begin{align}
 J(u_k)&=J(u_k-u)+2\langle u_k -u, u\rangle_{H^{2}_{\nu}(\Omega)}+J(u)\nonumber\\
 &=J(u_k-u)+J(u)+o(1),\label{H2-energy-min}
 \intertext{and by strong convergence in $H^1(\Omega)$,}
 J(u_k)&=\int_{\Omega}|\Delta u_k-\Delta u|^{2}\, dx+ J(u)+o(1)\label{E-energy-min}.
 \end{align}
From Step one,
 \begin{equation*}
\widetilde{u}:=\frac{u}{\|u\|_{L^{2^{*}}(\Omega)}}\in M_{\Omega}
 \end{equation*}
and since $J(\widetilde{u})\geq \Sigma_{\nu}(\Omega)$,
\begin{equation}\label{minonM}
J(u)\geq \Sigma_{\nu}(\Omega)\|u\|^{2}_{L^{2^{*}}(\Omega)}.
\end{equation}
Thus, 
 \begin{align*}
 \Sigma_{\nu}(\Omega)&=\int_{\Omega}|\Delta u_k-\Delta u|^{2}\, dx+J(u)+o(1)\hspace{.5cm}\text{( by \eqref{E-energy-min})}\\
 &\geq\left(\frac{2^{4/N}}{S}+\varepsilon\right)^{-1}\|u_k-u\|^{2}_{L^{2^{*}}(\Omega)}+J(u)+o(1)\hspace{.2cm} \text{( by \eqref{DHL-Lemma} and \eqref{H1-convergence})}\\
&\geq\left(\frac{2^{4/N}}{S}+\varepsilon\right)^{-1}\|u_k-u\|^{2}_{L^{2^{*}}(\Omega)}+\Sigma_{\nu}(\Omega)\|u\|^{2}_{L^{2^{*}}(\Omega)}+o(1)\hspace{.5cm}\text{( by \eqref{minonM})}\\
 &=\left[\left(\frac{2^{4/N}}{S}+\varepsilon\right)^{-1}-\Sigma_{\nu}(\Omega)\right]\|u_k-u\|^{2}_{L^{2^{*}}(\Omega)}+\Sigma_{\nu}(\Omega)\left(\|u_k-u\|^{2}_{L^{2^{*}}(\Omega)}+\|u\|^{2}_{L^{2^{*}}(\Omega)}\right)+o(1)\\
 &\geq\left[\left(\frac{2^{4/N}}{S}+\varepsilon\right)^{-1}-\Sigma_{\nu}(\Omega)\right]\|u_k-u\|^{2}_{L^{2^{*}}(\Omega)}+\Sigma_{\nu}(\Omega)\left(\|u_k-u\|^{2^{*}}_{L^{2^{*}}(\Omega)}+\|u\|^{2^{*}}_{L^{2^{*}}(\Omega)}\right)^{\frac{2}{2^{*}}}+o(1)\\
&=\left[\left(\frac{2^{4/N}}{S}+\varepsilon\right)^{-1}-\Sigma_{\nu}(\Omega)\right]\|u_k-u\|^{2}_{L^{2^{*}}(\Omega)}+\Sigma_{\nu}(\Omega)+o(1)\hspace{.5cm}\text{( by \eqref{vitali-2})}. 
 \end{align*}
Since by assumption we have $\Sigma_{\nu}(\Omega)<S/{2^{4/N}}$, we deduce that $u_k\to u$ strongly in $L^{2^{*}}(\Omega)$, and $\|u\|_{L^{2^{*}}(\Omega)}=1$.
 
\indent\emph{Step three: Strong convergence in $H^{2}_{\nu}(\Omega)$.} By weak lower semi-continuity of $J$, and since $u\in M_{\Omega}$,
 \begin{equation*}
 \Sigma_{\nu}(\Omega)\leq J(u)\leq\liminf_{k\to\infty}J(u_k)=\Sigma_{\nu}(\Omega).
 \end{equation*} 
 Therefore, combining this with \eqref{H2-energy-min} we conclude that $u_k\to u$ strongly in $H^{2}_{\nu}(\Omega)$. 
\end{proof}
\section{Asymptotic estimates}\label{sec-estimates-4}
In this section we take into account the smoothness of the boundary $\partial\Omega$ and the effect of the principal curvatures at some boundary point. 

Before proceeding any further, we settle the geometrical aspect of our problem. Since $\Omega\subset\R^N$ is a bounded set, there exists a ball of radius $R_0>0$ such that $\Omega\subset B_{R_0}$. In view of the smoothness of $\Omega$, there exists $\overline{x}\in\partial\Omega$ such that in a neighborhood of $\overline{x}$, we have that $\Omega$ lies on one side of the tangent plane at $\overline{x}$, and the mean curvature with respect to the unit outward normal at $\overline{x}$ is positive. Due to the invariance of rotations and translations, by a change of variables we may assume that $\overline{x}$ is the origin, that the tangent hyperplane coincides with $\{x_N=0\}$, and that $\Omega\subset\R^{N}_{+}=\{x=(x^{\prime},x_N):x_N>0\}$. By the fact that $\Omega$ is a smooth subset, there are $R>0$, and a smooth function $\rho:\{x^{\prime}\in\R^{N-1}:|x^{\prime}|<R\}\to\R_{+}$ such that
 \begin{equation*}
   \begin{cases}
     \Omega\cap B_R=\{ (x^{\prime},x_N)\in B_R:x_N>\rho(x^{\prime})\}\\[.5em]
     \partial\Omega\cap B_R=\{ (x^{\prime},x_N)\in B_R:x_N=\rho(x^{\prime})\}.
     \end{cases}
\end{equation*}
Since the curvature is positive at the origin, there are real constants $(\kappa_j)^{N-1}_{j=1}$, which are called the principal curvatures, that satisfy
\begin{equation}\label{rho-def}
H_N(0):=\frac{2}{N-1}\sum^{N-1}_{j=1}\kappa_j>0,\hspace{.5cm}\text{and}\hspace{.5cm}\rho(x^{\prime})=\sum^{N-1}_{j=1}\kappa_j x^{2}_{j}+O(|x^{\prime}|^{3})\hspace{.3cm}\text{as}\ |x^{\prime}|\to 0.
\end{equation}

Recall that a crucial point in getting compactness in the proof of Lemma \ref{S_alphaK} was the assumption $\Sigma_{\nu}(\Omega)<S/{2^{N/4}}$. In our next result we establish this inequality. 

\begin{lemma}\label{sharp-S-alpha}
Assume that $\Omega$ is an open bounded subset of $\R^N$ with smooth boundary, and $N\geq 5$. Then, there holds
\begin{equation*}
\Sigma_{\nu}(\Omega)<\frac{S}{2^{4/N}}.
\end{equation*}
\end{lemma}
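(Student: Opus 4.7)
\emph{Plan.} I would prove the strict inequality by constructing a concentration sequence at a boundary point of positive mean curvature, and showing that the curvature produces a strictly negative first-order correction to the flat half-space value $S/2^{4/N}=\Sigma_\nu(\R^N_+)$ of Lemma~\ref{sharp-H2-mu}. The point $p=0$ with $H_N(0)>0$ (and local graph $x_N=\rho(x')=\sum_{j}\kappa_j x_j^2+O(|x'|^3)$) provided by the paragraph preceding the statement is the natural candidate.

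\emph{Test function.} To enforce the Neumann condition exactly, I would use Fermi coordinates in a neighborhood of $0$: a diffeomorphism $\Phi\colon \Omega\cap U\to \R^N_+\cap V$ with $\Phi(\partial\Omega\cap U)\subset\{y_N=0\}$ and $d\Phi\,\nu=-e_N$ on $\partial\Omega$ (so that the outward normal in $x$-coordinates corresponds to $-\partial_{y_N}$). With $\eta\in C_c^\infty(B_R)$ a radial cut-off equal to $1$ on $B_{R/2}$ and $u_\varepsilon$ the Talenti bubble \eqref{U-epsi}, set
\[
v_\varepsilon(x):=(\eta\,u_\varepsilon)\bigl(\Phi(x)\bigr),\qquad x\in\Omega,
\]
extended by zero outside $\Omega\cap U$. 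Since $\eta u_\varepsilon$ is radial, hence even in $y_N$, one has $\partial_{y_N}(\eta u_\varepsilon)|_{y_N=0}=0$, whence $\partial_\nu v_\varepsilon\equiv 0$ on $\partial\Omega$ and $v_\varepsilon\in H^2_\nu(\Omega)$. The normalized function $v_\varepsilon/\|v_\varepsilon\|_{L^{2^*}(\Omega)}$ is then an admissible competitor for $\Sigma_\nu(\Omega)$.

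\emph{Asymptotic expansion.} Pushing all integrals to $\R^N_+$ via $\Phi$ and using Gauss' lemma, the metric expands as $g_{NN}=1$, $g_{\alpha N}=0$ and $g_{\alpha\beta}(y)=\delta_{\alpha\beta}-2 y_N \kappa_\alpha\delta_{\alpha\beta}+O(|y|^2)$, so $\sqrt{\det g}=1-\bigl(\sum_\alpha\kappa_\alpha\bigr)y_N+O(|y|^2)$; the Laplace-Beltrami operator $\Delta_g$ picks up a linear-in-$y_N$ correction proportional to the $\kappa_\alpha$. Rescaling $y=\varepsilon z$ and exploiting the $z_N$-evenness of $u_1$, the leading term in both numerator and denominator of the Rayleigh quotient is $\tfrac12 S^{N/4}$ and the first corrections are
\begin{align*}
\int_\Omega |\Delta v_\varepsilon|^2\,dx &= \tfrac12 S^{N/4}-A\,H_N(0)\,\varepsilon+o(\varepsilon),\\
\int_\Omega |v_\varepsilon|^{2^*}\,dx &= \tfrac12 S^{N/4}-B\,H_N(0)\,\varepsilon+o(\varepsilon),
\end{align*}
with $A,B>0$ explicit, while $\int_\Omega(|\nabla v_\varepsilon|^2+\alpha|v_\varepsilon|^2)\,dx=o(\varepsilon)$ (at least for $N$ large). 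Taylor expanding then gives
\[
\Sigma_\nu(\Omega)\le \frac{J(v_\varepsilon)}{\|v_\varepsilon\|_{L^{2^*}(\Omega)}^2}=\frac{S}{2^{4/N}}\bigl(1-c_N\,H_N(0)\,\varepsilon\bigr)+o(\varepsilon),\qquad c_N:=\frac{2}{S^{N/4}}\Bigl(A-\frac{N-4}{N}B\Bigr).
\]

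\emph{Main obstacle.} The decisive point is to check $c_N>0$, equivalently $A>\tfrac{N-4}{N}B$. Both $A$ and $B$ are explicit weighted integrals of the Talenti profile $u_1$ and its Laplacian against $z_N$; using the Paneitz identity $\Delta^2 u_1=u_1^{2^*-1}$ together with radial symmetry, they reduce to one-variable beta-type integrals from which the strict inequality can be read off. A secondary, dimension-dependent issue arises in low dimensions ($N=5$ and marginally $N=6$), where $\|\nabla v_\varepsilon\|_{L^2}^2$ is no longer $o(\varepsilon)$; there I would refine $v_\varepsilon$ by a $C^1$-gluing in the spirit of $z_\varepsilon$ in Lemma~\ref{Sigma=S-Rn}, or adjust the concentration rate, so that the gradient and $L^2$ contributions stay strictly below the negative curvature term. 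Once these are handled, taking $\varepsilon$ small concludes.
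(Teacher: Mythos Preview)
Your approach is essentially the one taken in the paper: straighten the boundary near a point of positive mean curvature, transplant a cut-off Talenti bubble via the chart so that radiality forces $\partial_\nu v_\varepsilon=0$, and expand the Rayleigh quotient to first order in $\varepsilon$. The paper carries this out explicitly, reducing the sign check $c_N>0$ (for $N\ge 6$) to a Beta/Gamma-function identity, exactly as you anticipate.

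The one genuine gap is your treatment of $N=5$. Neither of your proposed fixes works: the $O(\varepsilon)$ contributions of $\|\nabla v_\varepsilon\|_{L^2}^2$ and $\|v_\varepsilon\|_{L^2}^2$ come from the bubble itself (the integrals $\int_{B_r}|\nabla u_\varepsilon|^2$ and $\int_{B_r}|u_\varepsilon|^2$ are already of order $\varepsilon$ in dimension $5$), so a $C^1$-gluing near the edge of the support or a change of concentration rate cannot push them below $\varepsilon$. What actually saves $N=5$ --- and what the paper exploits --- is that one of the curvature corrections in $\|\Delta v_\varepsilon\|_{L^2}^2$ (arising from the cross derivatives $\partial^2_{y_j y_N}u_\varepsilon$) produces an integral that is only barely convergent for $N\ge 6$ and diverges logarithmically for $N=5$. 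The expansion in that case is
\[
\int_\Omega|\Delta v_\varepsilon|^2\,dx=\tfrac12 S^{5/4}-C\,H_5(0)\,\varepsilon\log\tfrac{1}{\varepsilon}+O(\varepsilon),
\]
and this $\varepsilon\log(1/\varepsilon)$ term dominates all the $O(\varepsilon)$ lower-order contributions without any modification of the test function. Incidentally, $N=6$ is not marginal: there the gradient term is $O(\varepsilon^2\log(1/\varepsilon))=o(\varepsilon)$ and the argument for $N\ge 6$ goes through uniformly.
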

\begin{proof}
\noindent\emph{Step one: Straightening the boundary.} Note that for any $x\in\partial\Omega\cap B_R$, we have that $x=(x^{\prime},\rho(x^{\prime}))$, where $\rho$ is defined in \eqref{rho-def}. Consequently, an outward orthogonal vector to the tangent space is given by
\begin{equation*}
\nu(x)=\begin{bmatrix}
    \nabla\rho(x^{\prime})\\
    -1
\end{bmatrix}
.
\end{equation*}
For some open subset $V$ of $\R^N$, we define
\begin{align}\label{Phi-chart-def}
  \Phi \colon V\subset\R^N &\to \R^N\nonumber\\
  (y^{\prime},y_N) &\mapsto (y^{\prime},\rho(y^{\prime}))-y_N\nu(y^{\prime},\rho(y^{\prime})).
\end{align}
Observe that the Jacobian matrix of $\Phi$ is given by
\begin{equation}\label{Jacobian-Phi}
D\Phi=\begin{bmatrix}
    1-y_N\dfrac{\partial\nu_1}{\partial{y_1}} &-y_N\dfrac{\partial\nu_1}{\partial y_2} & \dots & -y_N\dfrac{\partial\nu_1}{\partial y_{N-1}}&-\nu_1\\[.7em]
    -y_N\dfrac{\partial\nu_2}{\partial y_1}& 1-y_N\dfrac{\partial\nu_2}{\partial y_2} & \dots &-y_N\dfrac{\partial\nu_2}{\partial y_{N-1}}& -\nu_2\\[.7em]
    \vdots & \vdots & \ddots & \vdots &\vdots\\[.7em]
    -y_N\dfrac{\partial\nu_{N-1}}{\partial y_1}& -y_N\dfrac{\partial\nu_{N-1}}{\partial y_2}& \dots &1-y_N\dfrac{\partial\nu_{N-1}}{\partial y_{N-1}}&-\nu_{N-1}\\[.7em]
    \nu_1 & \nu_2& \dots &\nu_{N-1} & 1
\end{bmatrix}
.
\end{equation}
From this, we immediately deduce that for $(y^{\prime},y_N)=(0,0)$, there holds $D\Phi(0,0)=\text{Id}$, where $\text{Id}$ is the identity matrix of size $N$. By the Inverse Function Theorem there exist $r_0>0$, and $U$ an open subset of $\R^N$ such that $\Phi:B^{+}_{r_0}\to\Omega\cap U$ is a smooth diffeomorphism, where $B^{+}_{r_0}:=B_{r_0}\cap\{y_N> 0\}$. Now, let $\eta$ be a $C^{\infty}$ radial fixed cut-off function with $0\leq\eta\leq 1$, and
\begin{align*}
\eta(r)=
\begin{cases}
1,&\text{if}\ r\leq r_{0}/4\\
0,&\text{if}\ r\geq r_{0}/2.
\end{cases}
\end{align*}
Set 
\begin{equation*}
\varphi_{\varepsilon}(y):=\eta(|y|) u_{\varepsilon}(|y|),
\end{equation*}
where $u_{\varepsilon}$ is defined in \eqref{U-epsi}. As a consequence, the following function is well-defined
\begin{equation*}
\psi_{\varepsilon}(x):=\varphi_{\varepsilon}\circ\Phi^{-1}(x).
\end{equation*}
Note that for $x=(x^{\prime},\rho(x^{\prime}))\in\partial\Omega\cap B_R$, 
\begin{align*}
\lim_{t\to 0}\frac{\psi_{\varepsilon}(x)-\psi_{\varepsilon}(x-t\nu(x))}{t}&=\lim_{t\to 0}\frac{\varphi_{\varepsilon}(\Phi^{-1}(x))-\varphi_{\varepsilon}(\Phi^{-1}(x-t\nu(x)))}{t}\\
&=\lim_{t\to 0}\frac{\varphi_{\varepsilon}(y^{\prime},0)-\varphi_{\varepsilon}(y^{\prime},t)}{t}\\
&=-\partial_{y_N}\varphi_{\varepsilon}(y^{\prime},0)\\
&=0,
\end{align*}
where in the last equality we have used the fact that $\varphi_{\varepsilon}$ is a radial function. Therefore $\psi_{\varepsilon}$ belongs to $H^{2}_{\nu}(\Omega)$.

By \eqref{rho-def} and \eqref{Jacobian-Phi},
\begin{equation}\label{matrix-Phi-diff}
D\Phi(y^{\prime},y_N)=\Id+A(y^{\prime},y_N)+O(|(y^{\prime},y_N)|^2),
\end{equation}
where
\begin{equation*}
A(y^{\prime},y_N)=\begin{bmatrix}
    -2y_N\kappa_1 &0 & \dots &0 & -2\kappa_1 y_1\\[.5em]
    0& -2y_N\kappa_2 & \dots &0 & -2\kappa_2 y_2\\[.5em]
    \vdots & \vdots & \ddots & \vdots &\vdots\\[.5em]
    0& 0& \dots & -2y_N\kappa_{N-1}&-2\kappa_{N-1} y_{N-1}\\[.5em]
     2\kappa_1 y_1& 2\kappa_2 y_2& \dots & 2\kappa_{N-1} y_{N-1}& 0
\end{bmatrix}
.
\end{equation*}
In addition, since $D\Phi$ is an inversible matrix, 
\begin{equation*}
D\Phi^{-1}(x)=\left(D\Phi(y)\right)^{-1},
\end{equation*}
where $x=\Phi(y)$. Thus, 
\begin{equation}\label{matrix-Phi-diff-II}
D\Phi^{-1}(x)=\Id-A(y)+O(|y|^2).
\end{equation}
Henceforth, for convenience we write
\begin{equation*}
y=\Phi^{-1}(x),\hspace{.3cm}\text{and}\hspace{.3cm} y_j=\left(\Phi^{-1}(x)\right)_j\hspace{.3cm}\text{for}\ j=1,\dots,N.
\end{equation*}
In view of the above notation, by \eqref{matrix-Phi-diff-II} the elements $(D\Phi^{-1}(x))_{ij}$ of the matrix $D\Phi^{-1}(x)$ are given by 
\begin{align}\label{inverse-Phi-system}
\begin{cases}
\dfrac{\partial y_{j}}{\partial x_j}&=1+2y_N\kappa_j + O(|y|^2),\ j\in\{1,\dots,N-1\},\\[1em]
\dfrac{\partial y_{i}}{\partial x_N}&=2\kappa_i y_i + O(|y|^2),\ i\in\{1,\dots,N-1\},\\[1em]
\dfrac{\partial y_N}{\partial x_j}&=-2\kappa_j y_j + O(|y|^2),\ j\in\{1,\dots,N-1\},\\[1em]
\dfrac{\partial y_i}{\partial x_j}&=O(|y|^2),\ i\neq j\ \text{and}\  i,j\in\{1,\dots,N-1\},\\[1em]
\dfrac{\partial y_N}{\partial x_N}&=1+ O(|y|^2).
\end{cases}
\end{align}
Now using the chain rule, for any $j\in\{1,\dots,N\}$,
\begin{align}
\frac{\partial\psi_{\varepsilon}(x)}{\partial x_j}&=\sum^{N}_{l=1}\frac{\partial\varphi_{\varepsilon}(y)}{\partial y_l}\frac{\partial y_l}{\partial x_j},\label{grad-chart-y}
\intertext{and}
\frac{\partial^2\psi_{\varepsilon}(x)}{\partial x^{2}_{j}}&=\sum^{N}_{k,l=1}\frac{\partial^2\varphi_{\varepsilon}(y)}{\partial y_k\partial y_l}\left(\frac{\partial y_k}{\partial x_j}\right)\left(\frac{\partial y_l}{\partial x_j}\right)+\sum^{N}_{l=1}\frac{\partial\varphi_{\varepsilon}(y)}{\partial y_l}\frac{\partial^2 y_l}{\partial x^{2}_{j}}.\label{lapla-chart-y}
\end{align}

\indent\emph{Step two: Estimate for $\|\Delta\psi_{\varepsilon}\|^2_{L^2(\Omega)}$.} We begin by  computing the derivatives of $u_{\varepsilon}$. Notice that for $l,k\in\{1,\dots,N\}$ fixed, 
\begin{align*}
\frac{\partial u_{\varepsilon}(y)}{\partial y_l}&=-\frac{\gamma_N (N-4)\varepsilon^{\frac{N-4}{2}}}{(\varepsilon^2+|y|^2)^{\frac{N-2}{2}}}y_l,
\intertext{and}
\frac{\partial^2 u_{\varepsilon}(y)}{\partial y_k\partial y_l}&=\gamma_N (N-4)\varepsilon^{\frac{N-4}{2}}\left(\frac{-\delta_{lk}}{(\varepsilon^2+|y|^2)^{\frac{N-2}{2}}}+\frac{(N-2)y_k y_l}{(\varepsilon^2+|y|^2)^{\frac{N}{2}}}\right),
\end{align*}
where $\delta_{lk}$ is the Kronecker delta, that is, $\delta_{lk}=1$ if $l=k$, and $\delta_{lk}=0$ otherwise. Since we are interested in an estimate for the $L^2$-norm of $\Delta\psi_{\varepsilon}$, it is enough to compute the derivatives of $\psi_{\varepsilon}$ when $\varphi_{\varepsilon}\equiv u_{\varepsilon}$. In this situation, from \eqref{inverse-Phi-system} for $j\in\{1,\dots,N-1\}$,
\begin{align}\label{psi-lapla-y_j}
\frac{\partial^2 \psi_{\varepsilon}(x)}{\partial x^{2}_{j}}&=\frac{\partial^2 u_{\varepsilon}(y)}{\partial y^{2}_{j}}\left(\frac{\partial y_j}{\partial x_j}\right)^2+2\frac{\partial^2 u_{\varepsilon}(y)}{\partial y_j \partial y_N}\left(\frac{\partial y_j}{\partial x_j}\right)\left(\frac{\partial y_N}{\partial x_j}\right)+\frac{\partial u_{\varepsilon}(y)}{\partial y_N}\frac{\partial^2 y_N}{\partial x^{2}_{j}}+O\left(\frac{\varepsilon^{\frac{N-4}{2}}|y|^2}{(\varepsilon^2+|y|^2)^{\frac{N-2}{2}}}\right)\nonumber\\
&=\frac{\partial^2 u_{\varepsilon}(y)}{\partial y^{2}_{j}}(1+4y_N\kappa_j)-4\kappa_j y_j\frac{\partial^2 u_{\varepsilon}(y)}{\partial y_j \partial y_N} -2\kappa_j\frac{\partial u_{\varepsilon}(y)}{\partial y_N} +O\left(\frac{\varepsilon^{\frac{N-4}{2}}}{(\varepsilon^2+|y|^2)^{\frac{N-4}{2}}}\right)\nonumber\\
&=\frac{\partial^2 u_{\varepsilon}(y)}{\partial y^{2}_{j}}-\frac{2\gamma_N (N-4)\varepsilon^{\frac{N-4}{2}}}{(\varepsilon^2+|y|^2)^{\frac{N-2}{2}}}y_N\kappa_j+O\left(\frac{\varepsilon^{\frac{N-4}{2}}}{(\varepsilon^2+|y|^2)^{\frac{N-4}{2}}}\right).
\end{align}
In case $j=N$, 
\begin{align}\label{psi-lapla-y_N}
\frac{\partial^2\psi_{\varepsilon}(x)}{\partial x^{2}_{N}}&=\frac{\partial^2 u_{\varepsilon}(y)}{\partial y^{2}_N}+2\sum^{N-1}_{j=1}\frac{\partial^2 u_{\varepsilon}(y)}{\partial y_j \partial y_N}\left(\frac{\partial y_j}{\partial x_N}\right)\left(\frac{\partial y_N}{\partial x_N}\right)+O\left(\frac{\varepsilon^{\frac{N-4}{2}}}{(\varepsilon^2+|y|^2)^{\frac{N-4}{2}}}\right)\nonumber\\[.5em]
&=\frac{\partial^2 u_{\varepsilon}(y)}{\partial y^{2}_N}+4\gamma_N (N-4)(N-2)\sum^{N-1}_{j=1}\frac{\kappa_j y^{2}_{j} y_N\varepsilon^{\frac{N-4}{2}}}{(\varepsilon^2+|y|^2)^{\frac{N}{2}}}+O\left(\frac{\varepsilon^{\frac{N-4}{2}}}{(\varepsilon^2+|y|^2)^{\frac{N-4}{2}}}\right).
\end{align}
Thus, by \eqref{psi-lapla-y_j} and \eqref{psi-lapla-y_N},
\begin{align*}
\Delta\psi_{\varepsilon}(x)&=\Delta u_{\varepsilon}(y)-\frac{\gamma_N (N-4)(N-1)H_{N}(0)\varepsilon^{\frac{N-4}{2}}}{(\varepsilon^2+|y|^2)^{\frac{N-2}{2}}}y_N\\[.5em]
&\qquad +4\gamma_N (N-4)(N-2)\sum^{N-1}_{j=1}\frac{\kappa_j y^{2}_{j} y_N\varepsilon^{\frac{N-4}{2}}}{(\varepsilon^2+|y|^2)^{\frac{N}{2}}}+O\left(\frac{\varepsilon^{\frac{N-4}{2}}}{(\varepsilon^2+|y|^2)^{\frac{N-4}{2}}}\right).
\end{align*}
Recall that
\begin{equation*}
\Delta u_{\varepsilon}(y)=-\frac{\gamma_N (N-4)\varepsilon^{\frac{N-4}{2}}(N\varepsilon^2+2|y|^2)}{(\varepsilon^2+|y|^2)^{\frac{N}{2}}}<0.
\end{equation*}
Observe that due to the form of the matrix $D\Phi(y)$,
\begin{equation*}
\det D\Phi(y)=1-(N-1)H_{N}(0)y_N+O(|y|^2).
\end{equation*}
Henceforth, for convenience we denote
\begin{equation}\label{d_N-definition}
d_N=\gamma^{2}_N(N-4)^2 (N-1).
\end{equation}
Thus,
\begin{align*}
\int_{\Omega}|\Delta\psi_{\varepsilon}(x)|^2\, dx&=\int_{\Omega\cap U}|\Delta\psi_{\varepsilon}(x)|^2\, dx+O(\varepsilon^{N-4})\\
&=\int_{B^{+}_{r_0}}|\Delta\psi_{\varepsilon}(\Phi(y))|^2 |\det D\Phi(y)|\, dy+O(\varepsilon^{N-4})\\
&=I_1+I_2+I_3+I_4+I_5,
\end{align*}
where
\begin{align*}
I_1&=\int_{B^{+}_{{r_0}/2}}|\Delta u_{\varepsilon}(y)|^2\, dy,\\
I_2&=-d_N H_N(0)\varepsilon^{N-4}\int_{B^{+}_{{r_0}/2}}\frac{(N\varepsilon^2+2|y|^2)^2}{(\varepsilon^2+|y|^2)^N}y_N\, dy,\\
I_3&=2 d_N H_N(0)\varepsilon^{N-4}\int_{B^{+}_{{r_0}/2}}\frac{(N\varepsilon^2+2|y|^2)}{(\varepsilon^2+|y|^2)^{N-1}}y_N\, dy,\\
I_4&=-\frac{8 d_N (N-2)}{N-1}\varepsilon^{N-4}\sum^{N-1}_{j=1}\kappa_j\int_{B^{+}_{{r_0}/2}}\frac{(N\varepsilon^2+2|y|^2)}{(\varepsilon^2+|y|^2)^{N}}y^{2}_{j} y_N\, dy,\\
\intertext{and}
I_5&=
\begin{cases}
O(\varepsilon^2), &\text{if}\ N\geq 7\\
O(\varepsilon^2 \log\frac{1}{\varepsilon}), &\text{if}\ N=6\\
O(\varepsilon), &\text{if}\ N=5.
\end{cases}
\end{align*}
In this way, by \eqref{laplacian-o},
\begin{equation*}
I_1=\frac{S^{N/4}}{2}+O(\varepsilon^{N-4}),
\end{equation*}
and we estimate $I_2+I_3$ as follows,
\begin{align*}
I_2+I_3&=-d_N H_{N}(0)(N-2)\varepsilon^{N-2}\int_{B^{+}_{{r_0}/2}}\frac{(N\varepsilon^2+2|y|^2)}{(\varepsilon^2+|y|^2)^N}y_N\, dy\nonumber\\
&=-d_N H_{N}(0)(N-2)J_1\varepsilon+o(\varepsilon),
\end{align*}
where
\begin{equation*}
J_1=\int_{\R^{N}_{+}}\frac{N+2|y|^2}{(1+|y|^2)^{N}}y_N\, dy>0.
\end{equation*}
To estimate $I_4$ we first note that by symmetry,
\begin{equation*}
\int_{B^{+}_{r_0/2}}\frac{N\varepsilon^2+2|y|^2}{(\varepsilon^2+|y|^2)^N}y^{2}_{j}y_N\, dy=\frac{1}{N-1}\int_{B^{+}_{r_0/2}}\frac{(N\varepsilon^2+2|y|^2)(|y|^2-y^{2}_{N})}{(\varepsilon^2+|y|^2)^N}y_N\, dy.
\end{equation*}
Thus, 
\begin{align*}
I_4&=-\frac{4d_N (N-2)}{N-1}H_N(0)\varepsilon^{N-4}\int_{B^{+}_{r_0/2}}\frac{(N\varepsilon^2+2|y|^2)(|y|^2-y^{2}_{N})}{(\varepsilon^2+|y|^2)^N}y_N\, dy\\
&=-\frac{4d_N (N-2)}{N-1}H_N(0)\varepsilon\int_{B^{+}_{r_0/2\varepsilon}}\frac{(N+2|y|^2)(|y|^2-y^{2}_{N})}{(1+|y|^2)^N}y_N\, dy\\
&=-\begin{cases}
\frac{4d_N (N-2)}{N-1}H_N(0)J_2 \varepsilon+o(\varepsilon),&\text{if}\ N\geq 6\\[.4em]
8\pi^{2}\sqrt[4]{105}H_{5}(0)\varepsilon\log\frac{1}{\varepsilon}+O(\varepsilon),&\text{if}\ N=5,
\end{cases}
\end{align*}
where 
\begin{equation*}
J_2=\int_{\R^{N}_{+}}\frac{(N+2|y|^2)(|y|^2-y^{2}_N)}{(1+|y|^2)^N}y_N\, dy>0.
\end{equation*}
Consequently, 
\begin{equation}\label{Lapla-Fermi-estim}
\int_{\Omega}|\Delta\psi_{\varepsilon}|^2\, dx=\frac{S^{N/4}}{2}+o(\varepsilon)-
\begin{cases}
d_N H_N(0)(N-2)\left(J_1+\dfrac{4 J_2}{N-1}\right)\varepsilon+o(\varepsilon),&\text{if}\ N\geq 6\\[.4em]
8\pi^{2}\sqrt[4]{105}H_5(0)\varepsilon\log\frac{1}{\varepsilon}+O(\varepsilon),&\text{if}\ N=5.
\end{cases}
\end{equation}

\indent\emph{Step three: Estimate for $\|\psi_{\varepsilon}\|^{2^{*}}_{{L^{2^{*}}}(\Omega)}$.} Arguing as in the previous step, 
\begin{align}\label{Step-3-estim-critical}
\int_{\Omega}|\psi_{\varepsilon}(x)|^{2^{*}}\, dx&=\int_{\Omega\cap U}|\psi_{\varepsilon}(x)|^{2^{*}}\, dx+O(\varepsilon^N)\nonumber\\
&=\int_{B^{+}_{r_0}}|\psi_{\varepsilon}(\Phi(y))|^{2^{*}}|\det D\Phi(y)|\, dy+O(\varepsilon^N)\nonumber\\
&=\int_{B^{+}_{{r_0}/2}}|u_{\varepsilon}(y)|^{2^{*}}\big(1-(N-1)H_{N}(0)y_N+O(|y|^2)\big)\, dy+O(\varepsilon^N)\nonumber\\
&=\frac{S^{N/4}}{2}+O(\varepsilon^2)-\gamma^{2^{*}}_N (N-1)H_{N}(0)\varepsilon^N\int_{B^{+}_{{r_0}/2}}\frac{y_N}{(\varepsilon^2+|y|^2)^{N}}\, dy\nonumber\\
&=\frac{S^{N/4}}{2}+O(\varepsilon^2)-\gamma^{2^{*}}_N (N-1)H_{N}(0)J_3\varepsilon,
\end{align}
where
\begin{equation*}
J_3=\int_{\R^{N}_{+}}\frac{y_N}{(1+|y|^2)^N}\, dy>0.
\end{equation*}

\indent\emph{Step four: Estimate for $\|\nabla\psi_{\varepsilon}\|^2_{L^2(\Omega)}$.} Arguing as previously, 
\begin{equation}\label{grad-chart-y-est}
\int_{\Omega}|\nabla\psi_{\varepsilon}(x)|^2\, dx\leq
\begin{cases}
    O(\varepsilon^2),&\text{if}\ N\geq 7\\[.5em]
    O\left(\varepsilon^2\log\frac{1}{\varepsilon}\right),&\text{if}\ N=6\\[.5em]
    O(\varepsilon),&\text{if}\ N=5. 
\end{cases}
\end{equation}

\indent\emph{Step five: Estimate for $\|\psi_{\varepsilon}\|^2_{L^2(\Omega)}$.} In the same way, 
\begin{equation}\label{L2-norm-chart-y-est}
\int_{\Omega}|\psi_{\varepsilon}(x)|^2\, dx\leq
\begin{cases}
    O(\varepsilon^4),&\text{if}\ N\geq 9\\[.5em]
    O\left(\varepsilon^4\log\frac{1}{\varepsilon}\right),&\text{if}\ N=8\\[.5em]
    O(\varepsilon^{N-4}),&\text{if}\ N\in\{5,6,7\}. 
\end{cases}
\end{equation}

\indent\emph{Step six: Conclusion.} By \eqref{Lapla-Fermi-estim}-\eqref{L2-norm-chart-y-est}, for $N\geq 6$, 
\begin{align*}
\Sigma_{\nu}(\Omega)&\leq
\frac{\int_{\Omega}|\Delta\psi_{\varepsilon}|^2\, dx+\int_{\Omega}|\nabla\psi_{\varepsilon}|^2\, dx+\alpha\int_{\Omega}|\psi_{\varepsilon}|^2\, dx}{\left(\int_{\Omega}|\psi_{\varepsilon}|^{2^{*}}\, dx\right)^{\frac{2}{2^{*}}}}\\
&\leq\frac{S}{2^{4/N}}+o(\varepsilon)\\
&\qquad -2^{1-4/N}S^{1-N/4}H_N(0)\Bigg[d_N(N-2)\left(J_1+\frac{4J_2}{N-1}\right)-\frac{(N-4)(N-1)}{N}\gamma^{2^{*}}_N J_3\Bigg]\varepsilon.
\end{align*}
Now recall the explicit values of $\gamma_N$, and $d_N$ in \eqref{Gamma-def-opt} and \eqref{d_N-definition}, respectively. In order to show that the term between the brackets is positive, it is enough to guarantee that 
\begin{equation*}
\beta_N:=\frac{1}{N+2}\int_{\R^{N}_+}\frac{N+2|y|^2}{(1+|y|^2)^N}y_N\, dy-\int_{\R^{N}_+}\frac{y_N}{(1+|y|^2)^N}\, dy\hspace{.2cm}\text{is positive}.
\end{equation*}
Denote by $\mathbb{S}^{N-1}$ the unit sphere and by $c(N)$ a positive constant that depends on $N$. Then,
\begin{align*}
\beta_N&=\int_{\mathbb{S}^{N-1}\cap\R^{N}_{+}}y_N\, d\sigma\left(\frac{1}{N+2}\int^{\infty}_{0}\frac{N+2r^2}{(1+r^2)^N}r^{N}\, dr-\int^{\infty}_{0}\frac{r^N}{(1+r^2)^N}\, dr\right)\\
&=c(N)\int^{\infty}_{0}\frac{r^2-1}{(1+r^2)^N}r^N\, dr\\
&=c(N)\left(\int^{\infty}_{0}\frac{t^{\frac{N+1}{2}}}{(1+t)^N}\, dt-\int^{\infty}_{0}\frac{t^{\frac{N-1}{2}}}{(1+t)^N}\, dt \right)\\
&=c(N)\left[\frac{\Gamma\left(\frac{N+1}{2}+1\right)\Gamma\left(\frac{N-1}{2}-1\right)-\Gamma\left(\frac{N+1}{2}\right)\Gamma\left(\frac{N-1}{2}\right)}{\Gamma(N)}\right]\\
&=c(N)\frac{\Gamma\left(\frac{N-3}{2}\right)\Gamma\left(\frac{N+1}{2}\right)}{\Gamma(N)},
\end{align*}
which yields $\beta_N>0$. Now, going back to the above inequality, and making $\varepsilon$ sufficiently small we get our result for $N\geq 6$. 

In case $N=5$, 
\begin{align*}
\Sigma_{\nu}(\Omega)&\leq\frac{S}{2^{4/N}}+O(\varepsilon)-2^{14/5}\pi^{2}\sqrt[4]{\frac{105}{S}}H_5(0)\varepsilon\log\frac{1}{\varepsilon}\\
&<\frac{S}{2^{4/N}},
\end{align*}
provided $\varepsilon$ is sufficiently small. This completes the proof.
\end{proof}

Now we are in position to give the proof of Theorem \ref{main-theo-I}.

\begin{proof}[Proof of Theorem \ref{main-theo-I}]
By Lemmas \ref{S_alphaK} and \ref{sharp-S-alpha}, there exists a minimizer $u\in M_{\Omega}$ for $\Sigma_{\nu}(\Omega)$. Now, we have to rule out $u$ as the constant solution $u_1=\alpha^{\frac{N-4}{8}}$. To this end, note that
\begin{equation*}
\frac{\int_{\Omega}(|\Delta u_{1}|^2+|\nabla u_1|^2+\alpha |u_1|^2)\, dx}{\left(\int_{\Omega}|u_1|^{2^{*}}\, dx\right)^{\frac{2}{2^{*}}}}=\alpha |\Omega|^{4/N},
\end{equation*}
where $|\Omega|$ stands for the Lebesgue measure of $\Omega$. Then, we are done if we have $\overline{\alpha}>0$ for which 
\begin{equation*}
 \alpha |\Omega|^{4/N}>\Sigma_{\nu}(\Omega),\hspace{.5cm}\text{for all}\ \alpha\geq\overline{\alpha}.
\end{equation*}
By Lemma \ref{sharp-S-alpha}, the above inequality follows by taking $\overline{\alpha}=S\slash (2 |\Omega|)^{4/N}$. This completes the proof.
\end{proof}
\section{A Sobolev inequality of second order}\label{Section-2nd-Sob-ineq}
Our aim in this section is to prove Lemma \ref{sharp-H2-mu}. Our approach consists in providing a sharp inequality in $H^{2}_{\nu}(\R^{N}_{+})$, and then by a partition of unity argument we establish our result for functions in $H^{2}_{\nu}(\Omega)$.

\begin{proof}[proof of Lemma \ref{sharp-H2-mu}]
\noindent\emph{Step one: There holds $\Sigma_{\nu}(\R^{N}_{+})=S/2^{4/N}$, and the infimum is not achieved.} Consider the function $z_{\varepsilon}$ defined in \eqref{z-epsi-sharp-half}. By symmetry,  \eqref{estimate-lapla-z}, \eqref{L2-critical-L2}, and \eqref{estimate-grad-z} we infer that $z_{\varepsilon}\in H^{2}_{\nu}(\R^{N}_{+})$ satisfies
\begin{align*}
\int_{\R^{N}_{+}}|\Delta z_{\varepsilon}(|x|)|^2\, dx&=\frac{S^{N/4}}{2}+o(1),\hspace{1cm}\int_{\R^{N}_{+}}|\nabla z_{\varepsilon}(|x|)|^2\, dx=o(1),\\
\int_{\R^{N}_{+}}|z_{\varepsilon}(|x|)|^2\, dx&=o(1),\hspace{1cm}\text{and}\hspace{1cm}\int_{\R^{N}_{+}}|z_{\varepsilon}(|x|)|^{2^{*}}\, dx=\frac{S^{N/4}}{2}+o(1).
\end{align*}
As a consequence, 
\begin{equation*}
\lim_{\varepsilon\to 0}\frac{\int_{\R^{N}_{+}}(|\Delta z_{\varepsilon}|^2+|\nabla z_{\varepsilon}|^2+\alpha|z_{\varepsilon}|^2)\, dx}{\left(\int_{\R^{N}_{+}}|z_{\varepsilon}|^{2^{*}}\, dx\right)^{\frac{2}{2^{*}}}}=\frac{S}{2^{4/N}},
\end{equation*}
which shows that
\begin{equation}\label{2nd-ineq-Sob-I}
\Sigma_{\nu}(\R^{N}_{+})\leq \frac{S}{2^{4/N}}.
\end{equation}
Now we argue by contradiction, that is, assume that there exists $\phi\in H^{2}_{\nu}(\R^{N}_{+})$ such that 
\begin{equation}\label{2nd-ineq-Sob-II}
\frac{\int_{\R^{N}_{+}}(|\Delta \phi|^2+|\nabla \phi|^2+\alpha|\phi|^2)\, dx}{\left(\int_{\R^{N}_{+}}|\phi|^{2^{*}}\, dx\right)^{\frac{2}{2^{*}}}}\leq\frac{S}{2^{4/N}}.
\end{equation}
Define $\widetilde{\phi}$ as the reflection of $\phi$ with respect to the $x_N$-axis,
\begin{align*}
\widetilde{\phi}(x)=
\begin{cases}
\phi(x^{\prime},x_{N}),&\text{if}\ x_N\geq 0\\
\phi(x^{\prime},-x_{N}),&\text{if}\ x_N<0.
\end{cases}
\end{align*}
Since $\partial_{\nu}\phi=0$ along $\partial\R^{N}_{+}$ it is easily seen that $\widetilde{\phi}$ belongs to $H^{2}(\R^N)$. Then, using the symmetry (doubling the integrals),
\begin{equation*}
\frac{\int_{\R^{N}}(|\Delta \widetilde{\phi}|^2+|\nabla\widetilde{\phi}|^2+\alpha|\widetilde{\phi}|^2)\, dx}{\left(\int_{\R^{N}}|\widetilde{\phi}|^{2^{*}}\, dx\right)^{\frac{2}{2^{*}}}}\leq S.
\end{equation*}
However, this is a contradiction with Lemma \ref{Sigma=S-Rn}. Therefore, there exists no $\phi$ that belongs to $H^{2}_{\nu}(\R^{N}_{+})$ such that \eqref{2nd-ineq-Sob-II} holds. In other words,
\begin{equation*}
\frac{\int_{\R^{N}_{+}}(|\Delta \phi|^2+|\nabla \phi|^2+\alpha|\phi|^2)\, dx}{\left(\int_{\R^{N}_{+}}|\phi|^{2^{*}}\, dx\right)^{\frac{2}{2^{*}}}}>\frac{S}{2^{4/N}},\hspace{.3cm}\text{for all}\hspace{.3cm}\phi\in H^{2}_{\nu}(\R^{N}_{+}).
\end{equation*}
The above inequality combined with \eqref{2nd-ineq-Sob-I} implies that $\Sigma_{\nu}(\R^{N}_{+})=S/2^{4/N}$, and the infimum is not achieved.

\indent\emph{Step two: A partition of unity argument.} Since $\overline{\Omega}$ is a compact set, we can find finitely many points $x_i\in\overline{\Omega}$, radii $r_i>0$, with  corresponding sets $\Omega_i=\Omega\cap B_{r_i}(x_i)$ such that
\begin{equation*}
\overline{\Omega}\subset\bigcup^{n}_{i=1}\Omega_i.
\end{equation*}
Up to increasing the number of open sets,  we can assume that $x_{i}\in\partial\Omega$ whenever ${\Omega_{i}\cap\partial\Omega}\neq\varnothing$. Now let $(\widetilde{\zeta_i})^{n}_{i=1}$ be a smooth partition of unity subordinated to the covering $(\Omega_i)^{n}_{i=1}$. We split the set of indices as 
\begin{equation*}
\{1,2,\dots,n\}=\mathcal{I}\cup\mathcal{J},
\end{equation*}
where $\mathcal{I}$ contains the indices with $x_i \in\Omega$ while $\mathcal{J}$ contains the indices with $x_i\in\partial\Omega$.

\indent\emph{Case one: $\Omega_i\cap\Omega=\varnothing$.} We set
\begin{equation*}
\zeta_i=\frac{\widetilde{\zeta_i}^5}{\sum^{n}_{i=1}\widetilde{\zeta_i}^5}.
\end{equation*}
By construction, $(\zeta_i)^{n}_{i=1}$ is a partition of unity subordinated to the covering $(\Omega_i)^{n}_{i=1}$ such that $\zeta^{1/2}_i\in C^2(\overline{\Omega})$. We denote by $c_1$, and $c_2$ real positive constants such that $|\nabla\zeta^{1/2}_i|\leq c_1$, and $|\Delta\zeta^{1/2}_i|\leq c_2$.

Now choose any function $\phi\in H^2(\R^N)$. Consequently, $\zeta^{1/2}_i\phi\in H^2(\R^N)$, and $\supp(\zeta^{1/2}_i \phi)\subset \Omega_i$. By Lemma \ref{Sigma=S-Rn}, for $\varepsilon_0>0$,
\begin{align*}
\sum_{i\in\mathcal{I}}\left(\int_{\Omega_i}|\zeta^{1/2}_i \phi|^{2^{*}}\, dx\right)^{\frac{2}{2^{*}}}
&\leq\sum_{i\in\mathcal{I}}\left(\int_{\R^N}|\zeta^{1/2}_i \phi|^{2^{*}}\, dx\right)^{\frac{2}{2^{*}}}\\
&\leq\frac{1}{S}\sum_{i\in\mathcal{I}}\int_{\R^N}|\Delta(\zeta^{1/2}_i \phi)|^{2}\, dx\\
&\leq\frac{2^{4/N}}{S}\sum_{i\in\mathcal{I}}\left[\int_{\R^N}\left(\zeta^{1/2}_i |\Delta\phi|+2|\nabla\zeta^{1/2}_i||\nabla\phi|+|\phi| |\Delta\zeta^{1/2}_i|\right)^2\, dx\right]\\
&\leq\frac{2^{4/N}}{S}\left[(1+\varepsilon_0)^2\|\Delta\phi\|^{2}_{L^{2}(\Omega)}+B(\varepsilon_0)\|\phi\|^{2}_{H^1(\Omega)}\right], 
\end{align*}
where in the last inequality we have used Young inequality two times. Note that, for $\varepsilon_0>0$ sufficiently small, 
\begin{equation*}
\frac{2^{4/N}}{S}(1+\varepsilon_0)^2\leq\frac{2^{4/N}}{S}+\varepsilon\hspace{.2cm}\text{for}\hspace{.2cm} \varepsilon>0,
\end{equation*}
so that,
\begin{equation}\label{partition-unity-int-I}
\sum_{i\in\mathcal{I}}\left(\int_{\Omega_i}|\zeta^{1/2}_i \phi|^{2^{*}}\, dx\right)^{\frac{2}{2^{*}}}\leq\left(\frac{2^{4/N}}{S}+\varepsilon\right)\|\Delta\phi\|^{2}_{L^{2}(\Omega)}+B(\varepsilon)\|\phi\|^{2}_{H^1(\Omega)}.
\end{equation}

\indent\emph{Case two: $\Omega_i\cap\partial\Omega\neq\varnothing$.} In this case, for every $i\in\mathcal{J}$ we consider the maps
\begin{equation*}
\Phi^{-1}_i:\Omega_i\cap\partial\Omega\to V_i\subset\R^{N}_{+}
\end{equation*}
as defined in \eqref{Phi-chart-def}, where $V_i$ is some open subset. As previously observed, these maps have the property that in this new coordinate system, any $\phi\in H^{2}_{\nu}(\Omega)$ implies that $(\zeta^{1/2}_i\phi)\circ\Phi_i$ belongs to $H^{2}_{\nu}(\R^{N}_{+})$ for every $i\in\mathcal{J}$. In this way, we may assume
\begin{equation*}
|\det D\Phi(y)|\leq 1+\varepsilon_0
\end{equation*}
for $\varepsilon_0>0$ small enough, otherwise we may rearrange our covering in such a way that the sets $(\Omega)_{i\in\mathcal{J}}$ have smaller sizes. For convenience, we write $\vartheta_i(y)=(\zeta^{1/2}_i\phi)\circ\Phi(y)$. By the previous step, 
\begin{align}\label{partition-J-final}
\sum_{i\in\mathcal{J}}\left(\int_{\Omega_i\cap\partial\Omega}|(\zeta^{1/2}_i\phi)(x)|^{2^{*}}\, dx\right)^{\frac{2}{2^{*}}}&=\sum_{i\in\mathcal{J}}\left(\int_{V_i}|\zeta^{1/2}_i\phi\circ\Phi(y)|^{2^{*}}|\det D\Phi(y)|\, dy\right)^{\frac{2}{2^{*}}}\nonumber\\
&\leq (1+\varepsilon_0)^{\frac{2}{2^{*}}}\sum_{i\in\mathcal{J}}\left(\int_{\R^{N}_{+}}|\vartheta_i(y)|^{2^{*}}\, dy\right)^{\frac{2}{2^{*}}}\nonumber\\
&\leq\frac{2^{4/N}}{S}(1+\varepsilon_0)^{\frac{2}{2^{*}}}\sum_{i\in\mathcal{J}}\Bigg(\int_{\R^{N}_{+}}|\Delta\vartheta_i(y)|^2\, dy\nonumber\\
&\qquad +\int_{\R^{N}_{+}}|\nabla\vartheta_i(y)|^2\, dy+\alpha\int_{\R^{N}_{+}}|\vartheta_i(y)|^{2}\, dy\Bigg). 
\end{align}
Now we recall that by \eqref{inverse-Phi-system} we may find $\varepsilon_1>0$ small enough such that
\begin{equation}\label{Part-unity-Aubin-I}
\begin{cases}
|\Delta\vartheta_i|\leq (1+\varepsilon_1)|\Delta(\zeta^{1/2}_i\phi)|+\varepsilon_1|\nabla(\zeta^{1/2}_i\phi)|+\varepsilon_1|\zeta^{1/2}_i\phi|\\
|\nabla\vartheta_i|\leq(1+\varepsilon_1)|\nabla(\zeta^{1/2}_i\phi)|+\varepsilon_1|\zeta^{1/2}_i\phi|\\
|\vartheta_i|\leq (1+\varepsilon_1)|\zeta^{1/2}_i\phi|.\\
\end{cases}
\end{equation}
In addition, 
\begin{equation}
\begin{cases}\label{Part-unity-Aubin-II}
|\Delta(\zeta^{1/2}_i\phi)|\leq \zeta^{1/2}_i |\Delta\phi|+2|\nabla\zeta^{1/2}_i| |\nabla\phi|+|\phi| |\Delta\zeta^{1/2}_i|\\
|\nabla(\zeta^{1/2}_i\phi)|\leq \zeta^{1/2}_i |\nabla\phi|+|\nabla \zeta^{1/2}_i| |\phi|.
\end{cases}
\end{equation}
Then, by \eqref{Part-unity-Aubin-I}, and \eqref{Part-unity-Aubin-II} together with Young inequality,
\begin{equation}\label{varphi-L2-PU-1}
\sum_{i\in\mathcal{J}}\int_{\R^{N}_{+}}|\Delta\vartheta_i|^2\, dy=(1+\varepsilon_1)^2\int_{\Omega}|\Delta\phi|^2\, dy+B(\varepsilon_1)\|\phi\|^{2}_{H^{1}(\Omega)}.
\end{equation}
Similarly, 
\begin{align}\label{varphi-L2-PU-2}
\sum_{i\in\mathcal{J}}\int_{\R^{N}_{+}}|\nabla\vartheta_i|^2\, dy&\leq B(\varepsilon_1)\|\phi\|^{2}_{H^{1}(\Omega)},
\intertext{and}
\sum_{i\in\mathcal{J}}\int_{\R^{N}_{+}}|\vartheta_i|^2\, dy&\leq(1+\varepsilon_1)^2 \int_{\Omega}|\phi|^2\, dy.\label{varphi-L2-PU-3}
\end{align}
As previously, for $\varepsilon_0,\varepsilon_1>0$ sufficiently small,
\begin{equation*}
\frac{2^{4/N}}{S}(1+\varepsilon_0)^{\frac{2}{2^{*}}}(1+\varepsilon_1)^2 \leq\frac{2^{4/N}}{S}+\varepsilon\hspace{.3cm}\text{for}\hspace{.3cm}\varepsilon>0.
\end{equation*}
Hence, by inserting \eqref{varphi-L2-PU-1}-\eqref{varphi-L2-PU-3} into \eqref{partition-J-final},
\begin{equation*}
\sum_{i\in\mathcal{J}}\left(\int_{\Omega_i\cap\partial\Omega}|\zeta^{1/2}_i\phi|^{2^{*}}\, dx\right)^{\frac{2}{2^{*}}}\leq\left(\frac{2^{4/N}}{S}+\varepsilon\right)\|\Delta\phi\|^{2}_{L^2(\Omega)}+B(\varepsilon)\|\phi\|^{2}_{H^1(\Omega)}.
\end{equation*}
Therefore, by the above inequality together with \eqref{partition-unity-int-I}, for any $\varepsilon>0$,
\begin{align*}
\|\phi\|^{2}_{L^{2^{*}}(\Omega)}&=\|\phi^2\|_{L^{2^{*}/2}(\Omega)}\\
&=\left\|\sum^{n}_{i=1}\zeta_i \phi^2\right\|_{L^{2^{*}/2}(\Omega)}\\
&\leq\sum^{n}_{i=1}\|\zeta_i \phi^2\|_{L^{2^{*}/2}(\Omega)}\\
&=\sum^{n}_{i=1}\|\zeta^{1/2}_i \phi\|^{2}_{L^{2^{*}}(\Omega)}\\
&=\sum_{i\in\mathcal{I}}\left(\int_{\Omega_i}|\zeta^{1/2}_i \phi|^{2^{*}}\, dx\right)^{\frac{2}{2^{*}}}+\sum_{i\in\mathcal{J}}\left(\int_{\Omega_i\cap\partial\Omega}|\zeta^{1/2}_i \phi|^{2^{*}}\, dx\right)^{\frac{2}{2^{*}}}\\
&\leq\left(\frac{2^{4/N}}{S}+\varepsilon\right)\|\Delta\phi\|^{2}_{L^{2}(\Omega)}+B(\varepsilon)\|\phi\|^{2}_{H^1(\Omega)}.
\end{align*}
This completes the proof.
\end{proof}

\section{Minimizing solutions for small $\alpha$}\label{minimizers}

In this section, we prove the rigidity result for minimizing solutions when $\alpha\to 0$. The proof follows almost directly from the one of \cite{AdimYad93} for the second order case. We start with a $L^q$-bound, for $1\le q\le  \frac{N+4}{N-4}$, on positive solutions. The proof easily follows by integrating the equation. 
\begin{lemma}\label{borneL1}
  Any nonnegative solution $u$ of \eqref{problem-P-alpha} satisfies
  \begin{equation*}
   \alpha \int_{\Omega}u\, dx = \int_{\Omega}|u|^{\frac{N+4}{N-4}}\, dx
    \le \alpha^{\frac{N+4}{8}} |{\Omega}|.
  \end{equation*}
\end{lemma}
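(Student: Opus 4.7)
The plan is to integrate the PDE over $\Omega$ and use the Neumann-type boundary conditions to kill the fourth and second order terms, then close with H\"older.

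First, since $u\ge 0$, we have $|u|^{\frac{8}{N-4}}u = u^{\frac{N+4}{N-4}}$. Integrating the equation \eqref{problem-P-alpha} over $\Omega$ and applying the divergence theorem twice, we obtain
\begin{equation*}
\int_{\Omega}\Delta^2 u\, dx = \int_{\partial\Omega}\partial_{\nu}(\Delta u)\, d\sigma = 0,\qquad \int_{\Omega}\Delta u\, dx = \int_{\partial\Omega}\partial_{\nu} u\, d\sigma = 0,
\end{equation*}
where both boundary integrals vanish thanks to $\partial_{\nu}u=\partial_{\nu}(\Delta u)=0$ on $\partial\Omega$. This yields the equality
\begin{equation*}
\alpha\int_{\Omega}u\, dx = \int_{\Omega}u^{\frac{N+4}{N-4}}\, dx.
\end{equation*}

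Next, to get the quantitative bound, I would apply H\"older's inequality to the left-hand side, pairing $u\in L^{\frac{N+4}{N-4}}(\Omega)$ against $1\in L^{\frac{N+4}{8}}(\Omega)$ (the conjugate exponents summing to $1$):
\begin{equation*}
\int_{\Omega} u\, dx \le |\Omega|^{\frac{8}{N+4}}\left(\int_{\Omega}u^{\frac{N+4}{N-4}}\, dx\right)^{\frac{N-4}{N+4}}.
\end{equation*}
Setting $A:=\int_{\Omega}u^{\frac{N+4}{N-4}}\, dx$, the identity above becomes $A\le \alpha|\Omega|^{\frac{8}{N+4}}A^{\frac{N-4}{N+4}}$, i.e. $A^{\frac{8}{N+4}}\le \alpha |\Omega|^{\frac{8}{N+4}}$. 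Raising to the power $\frac{N+4}{8}$ gives $A\le \alpha^{\frac{N+4}{8}}|\Omega|$, which is the claimed estimate.

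There is no real obstacle here: the only thing to check is that the boundary conditions are exactly the ones needed to make both boundary integrals disappear, which is the case (the first is built in, the second is the natural condition on $\Delta u$ attached to the variational formulation in $H^2_{\nu}(\Omega)$). Note that this is why the problem is posed with the pair $\partial_{\nu}u=\partial_{\nu}(\Delta u)=0$ rather than Dirichlet or Navier conditions.
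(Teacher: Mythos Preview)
Your proof is correct and is exactly the argument the paper has in mind: integrate \eqref{problem-P-alpha} over $\Omega$, use the Neumann conditions $\partial_\nu u=\partial_\nu(\Delta u)=0$ to drop the boundary terms, and then apply H\"older with exponents $\tfrac{N+4}{N-4}$ and $\tfrac{N+4}{8}$ to close the inequality. The paper states only that ``the proof easily follows by integrating the equation,'' so you have simply filled in the details.
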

The bound is clearly sharp. In the subcritical case, one can use elliptic regularity to bootstrap the corresponding estimate to get a better bound or use Gidas-Spruck blow-up technique \cite{GS} to show directly that $u$ converges uniformly to zero as $\alpha\to 0$, see for instance \cite{MR849484,MR3564729}. In the critical case, we need a further hypothesis to improve the bound as shown by the next lemma.

\begin{lemma}\label{borneLinfinity0}
  Assume that $\alpha_k\to 0$ and the sequence $(v_k)_k\subset H^2_\nu(\Omega)$ satisfying
 \begin{equation}\label{vk}
\begin{cases}
\Delta^2 v_k -\Delta v_k+\alpha_k v_k=|v_k|^{\frac{8}{N-4}}v_k,&\mbox{in}\ \Omega,\\
\partial_{\nu} v_k =\partial_{\nu}(\Delta v_k)= 0,&\mbox{on}\ \partial\Omega,
\end{cases}
\end{equation}  
is such that $v_k\ge 0$ and $\sup_k \|v_k\|_{L^q(\Omega)}<\infty$ for some $q> 2^*$. Then $\|v_k\|_{L^{\infty}(\Omega)}\to 0$. 
\end{lemma}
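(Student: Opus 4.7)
The plan is to proceed in two stages: first establish a uniform $L^\infty$ bound on $(v_k)$ via a blow-up/rescaling argument exploiting the hypothesis $q>2^*$, then upgrade to convergence to zero by combining Lemma~\ref{borneL1} with fourth-order elliptic regularity.

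For the uniform $L^\infty$ bound, I would argue by contradiction. Suppose along a subsequence $M_k := \|v_k\|_{L^\infty(\Omega)} = v_k(p_k) \to \infty$ for some $p_k \in \overline\Omega$. Setting $\mu_k := M_k^{-2/(N-4)}$ and rescaling $w_k(y) := M_k^{-1} v_k(p_k + \mu_k y)$ on $\Omega_k := \mu_k^{-1}(\Omega - p_k)$, a direct computation gives
\begin{equation*}
\Delta^2 w_k - \mu_k^2 \Delta w_k + \alpha_k \mu_k^4 w_k = w_k^{(N+4)/(N-4)} \quad \text{in } \Omega_k,
\end{equation*}
with $\partial_\nu w_k = \partial_\nu \Delta w_k = 0$ on $\partial\Omega_k$ and $0 \le w_k \le w_k(0) = 1$. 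Since $\mu_k,\alpha_k\mu_k^4 \to 0$, interior and boundary Schauder estimates provide uniform $C^{4,\gamma}_{loc}$ bounds, so up to a subsequence $w_k \to w$ in $C^{4}_{loc}$ on a limit domain $\Omega_\infty$, either $\R^N$ (if $\operatorname{dist}(p_k,\partial\Omega)/\mu_k \to \infty$) or a half-space carrying Neumann--Neumann boundary conditions. In the half-space case, $\partial_\nu w = 0$ and $\partial_\nu \Delta w = 0$ together force $\partial_\nu^3 w = 0$ on the boundary, so the even reflection extends $w$ to a classical positive $C^4$ solution of $\Delta^2 w = w^{(N+4)/(N-4)}$ on $\R^N$; by the classification of \cite{Lin98,WeiXu99}, $w$ is a bubble $u_\varepsilon$, in particular nontrivial. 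The contradiction then follows from comparing $L^q$ masses: for fixed $R > 0$, $C^4_{loc}$ convergence gives $\|w_k\|_{L^q(B_R \cap \Omega_k)} \to \|w\|_{L^q(B_R \cap \Omega_\infty)} > 0$, whereas the change of variables yields
\begin{equation*}
\|w_k\|_{L^q(B_R \cap \Omega_k)}^q \le M_k^{-q} \mu_k^{-N} \|v_k\|_{L^q(\Omega)}^q = M_k^{2N/(N-4) - q} \|v_k\|_{L^q(\Omega)}^q \xrightarrow[k\to\infty]{} 0,
\end{equation*}
since $q > 2^* = 2N/(N-4)$ and the $L^q$-norms are uniformly bounded. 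Hence $\sup_k \|v_k\|_{L^\infty} < \infty$.

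For the convergence to zero, Lemma~\ref{borneL1} combined with $\alpha_k \to 0$ gives $\int_\Omega v_k \le |\Omega|\alpha_k^{(N-4)/8} \to 0$, so $v_k \to 0$ in $L^1(\Omega)$. Interpolating with the $L^\infty$ bound just obtained via $\int v_k^r \le \|v_k\|_{L^\infty}^{r-1}\int v_k$ yields $v_k \to 0$ in $L^r(\Omega)$ for every $r \in [1,\infty)$, and consequently $f_k := v_k^{(N+4)/(N-4)} \to 0$ in every $L^r(\Omega)$. Rewriting \eqref{vk} as
\begin{equation*}
\Delta^2 v_k - \Delta v_k + v_k = f_k + (1-\alpha_k) v_k
\end{equation*}
with Neumann--Neumann BCs, the left-hand operator is coercive on $H^2_\nu(\Omega)$ and the boundary conditions satisfy the Lopatinskii--Shapiro complementing condition, so the standard $L^p$-regularity for the fourth-order Neumann problem gives $\|v_k\|_{W^{4,p}} \le C(p,\Omega)(\|f_k\|_{L^p} + \|v_k\|_{L^p}) \to 0$ for every $p \in (1,\infty)$. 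Choosing $p > N/4$, the Sobolev embedding $W^{4,p}(\Omega) \hookrightarrow L^\infty(\Omega)$ concludes $\|v_k\|_{L^\infty} \to 0$.

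I expect the blow-up classification step to be the main obstacle: one must scale the Neumann conditions correctly on curved boundaries, verify that the combination $\partial_\nu w = \partial_\nu \Delta w = 0$ (together with the fact that $w$ is smooth) yields vanishing of the odd normal derivatives of orders $1$ and $3$ required for the even reflection to produce a globally $C^4$ entire solution, and then invoke the Lin--Wei--Xu classification. As an alternative more in the spirit of \cite{AdimYad93}, a Brezis--Kato-style bootstrap would avoid the classification altogether: the hypothesis $q > 2^*$ places the ``potential'' $V_k := v_k^{8/(N-4)}$ strictly above the critical fourth-order threshold $L^{N/4}$, which is enough to iterate integrability up to $L^\infty$ via repeated application of $L^p$-regularity.
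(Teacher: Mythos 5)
Your proof is correct, but it takes a genuinely different and heavier route than the paper's in the first stage. The paper proves the lemma entirely by an elliptic bootstrap: write $q=s(N+4)/(N-4)$, observe that the right-hand side $v_k^{(N+4)/(N-4)}$ lies in $L^s$, so $L^p$-regularity for the fourth-order Neumann problem yields $v_k\in W^{4,s}\hookrightarrow L^{Ns/(N-4s)}$; the exponent improves precisely when $s>2N/(N+4)$, which is the same as $q>2^*$, and after finitely many iterations one lands in $W^{4,s}\hookrightarrow C^{0,\gamma}(\overline\Omega)$ with a bound uniform in $k$. Combined with Lemma~\ref{borneL1} and Arzel\`a--Ascoli (or a simple interpolation), this gives $\|v_k\|_{L^\infty}\to 0$. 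Your Stage~1 blow-up argument is a valid alternative for the uniform $L^\infty$ bound, but it is overkill here: the contradiction you extract does not even use the Lin--Wei--Xu classification (only $w(0)=1$ and continuity of the limit), and it carries the nontrivial burden of justifying boundary Schauder estimates under rescaling of curved Neumann boundaries. The simple bootstrap you mention in your last paragraph as ``an alternative more in the spirit of [AdimYad93]'' is in fact exactly the paper's proof, and it makes Stage~1 unnecessary: once the bootstrap gives a uniform H\"older bound, your Stage~2 interpolation with the $L^1$ decay from Lemma~\ref{borneL1} concludes directly. One detail worth keeping from your write-up is the normalization $\Delta^2 v_k-\Delta v_k+v_k=f_k+(1-\alpha_k)v_k$, which makes the regularity constants genuinely uniform as $\alpha_k\to 0$; the paper does not spell this out. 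In short, both approaches are correct; the paper's is shorter and avoids the compactness/classification machinery, whereas yours is more robust (it would survive, e.g., weakening the $L^q$ hypothesis to a bound that merely rules out concentration) at the cost of extra technical verification at the boundary.
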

\begin{proof}
Using elliptic regularity, one shows that a $L^q$-bound on $v_k$, with $q=s(N+4)/(N-4)$, gives a $W^{4,s}$ estimate and therefore a $L^{\frac{Ns}{N-4s}}$ bound. This allows to start a bootstrap argument if $s>2N/(N+4)$ and to deduce  an apriori bound in $C^{0,\gamma}(\overline\Omega)$ for some $0<\gamma<1$. Lemma \ref{borneL1} then shows (with a simple interpolation argument) that $v_k$ converges uniformly to $0$ as $k\to\infty$.
\end{proof}

Observe that solutions with a priori finite energy are merely a priori bounded in $L^{2^*}$ so that Lemma \ref{borneLinfinity} cannot be used for those solutions. The next lemma shows that minimizing solutions are bounded in $L^\infty$. 
\begin{lemma}\label{borneLinfinity}
 Assume that $u\in M_{\Omega}$ achieves $\Sigma_{\nu}(\Omega)$ and $\alpha\le 1/4$. Then $u>0$. If we select $v$ as the multiple of $u$ that solves 
 \begin{align*}
\begin{cases}
\Delta^2 v -\Delta v+\alpha v=|v|^{\frac{8}{N-4}}v,&\mbox{in}\ \Omega,\\
\partial_{\nu} v =\partial_{\nu}(\Delta v)= 0,&\mbox{on}\ \partial\Omega,
\end{cases}
\end{align*} 
then $$\limsup_{\alpha \to 0} \|v\|_{L^{\infty}(\Omega)}<\infty.$$
 \end{lemma}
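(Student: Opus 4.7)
For positivity, I would exploit that $\alpha\le 1/4$ allows the factorization $\Delta^2-\Delta+\alpha=(-\Delta+a)(-\Delta+b)$ with $a,b=(1\pm\sqrt{1-4\alpha})/2\in(0,1]$. Since $\partial_\nu u=\partial_\nu\Delta u=0$ forces $\partial_\nu[(-\Delta+b)u]=0$, each factor acts as a Neumann Helmholtz-type operator whose Green function is strictly positive by the second-order strong maximum principle, and the composition yields a strictly positive Green function for the full fourth-order operator under the given boundary conditions. Let $\mu=\Sigma_\nu(\Omega)$ be the Lagrange multiplier, so $(\Delta^2-\Delta+\alpha)u=\mu|u|^{8/(N-4)}u$, and define $v_0\in H^2_\nu(\Omega)$ by
\begin{equation*}
(\Delta^2-\Delta+\alpha)v_0=\mu|u|^{(N+4)/(N-4)},\qquad \partial_\nu v_0=\partial_\nu\Delta v_0=0.
\end{equation*}
Positivity of the Green function gives $v_0\ge 0$, and the same applied to $v_0\pm u$ forces $v_0\ge|u|$ pointwise. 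Testing the equation for $v_0$ against itself and applying H\"older with $\|u\|_{L^{2^*}}=1$ yields $J(v_0)=\mu\int v_0|u|^{(N+4)/(N-4)}\le \mu\|v_0\|_{L^{2^*}}$, while minimality of $u$ gives $J(v_0)\ge\mu\|v_0\|_{L^{2^*}}^2$, so $\|v_0\|_{L^{2^*}}\le 1$. Combined with $v_0\ge|u|$ this forces $v_0=|u|$ a.e., so $|u|\in H^2_\nu(\Omega)$ is a nonnegative nontrivial solution of the factored equation; the strong maximum principle applied successively to $-\Delta+a$ and $-\Delta+b$ then gives $|u|>0$ in $\Omega$, and continuity of $u$ (from Brezis--Kato-type bootstrap at fixed $\alpha$, using the $L^{N/4}$ bound on the coefficient $|u|^{8/(N-4)}$) forces $u$ to have constant sign, say $u>0$.

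For the uniform $L^\infty$ bound on $v=\mu^{(N-4)/8}u$, I argue by contradiction: assume along some $\alpha_k\to 0$ that $M_k:=\|v_k\|_{L^\infty}\to\infty$, choose a maximum point $x_k\in\overline\Omega$, set $\lambda_k=M_k^{-2/(N-4)}\to 0$, and rescale $w_k(y):=M_k^{-1}v_k(x_k+\lambda_k y)$ on $\Omega_k:=\lambda_k^{-1}(\Omega-x_k)$. Then $0\le w_k\le 1$, $w_k(0)=1$, and $w_k$ solves
\begin{equation*}
\Delta^2 w_k-\lambda_k^2\Delta w_k+\alpha_k\lambda_k^4 w_k=w_k^{(N+4)/(N-4)}
\end{equation*}
with the inherited Neumann conditions. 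Since the right-hand side is uniformly bounded, $L^p$ elliptic regularity gives uniform $C^{4,\gamma}_{\mathrm{loc}}$ bounds, and up to a subsequence $w_k\to w$ in $C^{4}_{\mathrm{loc}}$ to a nonnegative function with $w(0)=1$ solving $\Delta^2 w=w^{(N+4)/(N-4)}$ on $\R^N$ (if $\mathrm{dist}(x_k,\partial\Omega)/\lambda_k\to\infty$) or on $\R^N_+$ with $\partial_\nu w=\partial_\nu\Delta w=0$ (otherwise). Lin's classification of positive entire solutions gives $\int_{\R^N}w^{2^*}=S^{N/4}$ in the interior case; in the boundary case, the Neumann conditions permit an even reflection across $\partial\R^N_+$ producing a positive entire solution, which by the same classification is a bubble centered on the hyperplane, so $\int_{\R^N_+}w^{2^*}=S^{N/4}/2$. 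In both cases $\int w^{2^*}\ge S^{N/4}/2$. By scale-invariance, $\int_{\Omega_k}w_k^{2^*}\,dy=\int_\Omega v_k^{2^*}\,dx=\mu_k^{N/4}=\Sigma_\nu(\Omega)^{N/4}$, so Fatou yields $S^{N/4}/2\le\liminf_k\Sigma_\nu(\Omega)^{N/4}$. However, inspection of the proof of Lemma \ref{sharp-S-alpha} shows that the negative leading correction in $\varepsilon$ is uniform for $\alpha$ in any bounded range (the $\alpha$-dependence enters only through $\int\psi_\varepsilon^2$, of strictly higher order in $\varepsilon$), giving $\Sigma_\nu(\Omega)\le S/2^{4/N}-c_0$ for some $c_0=c_0(\Omega,N)>0$ and all small $\alpha$, contradicting the Fatou lower bound.

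The main obstacle is the positivity step, specifically the careful justification of the chain $v_0=|u|\Rightarrow|u|\in H^2_\nu(\Omega)\Rightarrow|u|>0\Rightarrow u>0$: one must propagate the strong maximum principle rigorously through both second-order factors with their Neumann boundary conditions, and then upgrade the conclusion on $|u|$ to a sign condition on $u$ using continuity from elliptic regularity. The blow-up argument is otherwise standard, but relies on extracting from the asymptotics of Lemma \ref{sharp-S-alpha} a quantitative and $\alpha$-uniform strict inequality $\Sigma_\nu(\Omega)\le S/2^{4/N}-c_0$, which is a small but essential bookkeeping point.
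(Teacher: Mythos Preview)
Your proof is correct, and both parts use the same underlying ideas as the paper, but in each case you take a more elaborate route than necessary.

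\medskip

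\textbf{Positivity.} The paper exploits the same factorization you do, but in a more streamlined way: it writes
\[
J(u)=\int_\Omega\Bigl|-\Delta u+\tfrac12 u\Bigr|^2\,dx+(\alpha-\tfrac14)\int_\Omega u^2\,dx,
\]
and, if $-\Delta u+\tfrac12 u$ changes sign, defines $v$ by $-\Delta v+\tfrac12 v=|-\Delta u+\tfrac12 u|$ with $\partial_\nu v=0$. The strong maximum principle gives $v>|u|$ strictly, so the numerator does not increase while the denominator strictly increases (since $\alpha\le 1/4$), contradicting minimality. Your argument via the positivity of the full Green function, the auxiliary $v_0$, and the chain $J(v_0)\le\mu\|v_0\|_{L^{2^*}}$, $J(v_0)\ge\mu\|v_0\|_{L^{2^*}}^2$ is valid and is essentially the same dual-cone trick, just packaged through the fourth-order equation rather than a single second-order factor.

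\medskip

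\textbf{$L^\infty$ bound.} Here you miss a substantial simplification. Testing the Rayleigh quotient with the constant function gives directly
\[
\mu_k=\Sigma_{\nu,\alpha_k}(\Omega)\le \alpha_k|\Omega|^{4/N}\to 0,
\]
so $\int_\Omega v_k^{2^*}=\mu_k^{N/4}\to 0$ and $v_k\to 0$ strongly in $H^2(\Omega)$. After your rescaling, this forces $\int_{B_R}w^{2^*}=0$ for every $R$, hence $w\equiv 0$, contradicting $w(0)=1$. No classification of the limit and no quantitative refinement of Lemma~\ref{sharp-S-alpha} are needed; this is exactly how the paper argues (citing the Gidas--Spruck blow-up in \cite{AdimYad93}). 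Your path---classifying $w$ via Lin's theorem to get $\int w^{2^*}\ge S^{N/4}/2$ and then extracting an $\alpha$-uniform constant $c_0$ from the asymptotics of Lemma~\ref{sharp-S-alpha}---is correct, but it is working much harder than necessary.
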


\begin{proof}
Observe first that we know from Lemma \ref{S_alphaK} and Lemma \ref{sharp-S-alpha} that $\Sigma_{\nu}(\Omega)$ is indeed achieved. When $\alpha$ is small enough (or the measure of $\Omega$ is small enough), this is in fact simpler to show since for $u_1=1$, we have
\begin{equation*}
\frac{\int_{\Omega}(|\Delta u_{1}|^2+|\nabla u_1|^2+\alpha |u_1|^2)\, dx}{\left(\int_{\Omega}|u_1|^{2^{*}}\, dx\right)^{\frac{2}{2^{*}}}}=\alpha |\Omega|^{4/N}< S/{2^{4/N}}.
\end{equation*}
It is by now standard to show that if $u$ changes sign, then $u$ cannot be a minimizer, see e.g.,  \cite{MR2833588,MR3855391}. We sketch the argument for completeness. We can write 
\begin{equation*}
\Sigma_{\nu}(\Omega) = \frac{\int_{\Omega}|-\Delta u + \frac12 u|^2\, dx + (\alpha-\frac14)\int_{\Omega} |u|^2\, dx}{\left(\int_{\Omega}|u|^{2^{*}}\, dx\right)^{\frac{2}{2^{*}}}}.
\end{equation*}
If $-\Delta(-) u + \frac12 (-)u\ge 0$ then $(-)u>0$ by the strong maximum principle (observe $u\not\equiv 0$). If not, take $v\in H^2_{\nu}(\Omega)$ to be the unique solution of 
$$-\Delta v + \frac12 v = 
|-\Delta u + \frac12 u|,\ x\in \Omega.$$
By the strong maximum principle, we infer that $v(x) > |u(x)|$ in $\Omega$ so that 
$$0\le \int_{\Omega}|-\Delta v + \frac12 v|^2\, dx + (\alpha-\frac14)\int_{\Omega} |v|^2\, dx < \int_{\Omega}|-\Delta u + \frac12 u|^2\, dx + (\alpha-\frac14)\int_{\Omega} |u|^2\, dx$$
and $$\int_{\Omega}|v|^{2^{*}}\, dx > \int_{\Omega}|u|^{2^{*}}\, dx.$$
This contradicts the fact that $u$ is a minimizer. 

Now, since $u$ is a minimizer and $u\in M_{\Omega}$, we have 
$$\int_{\Omega}(|\Delta u|^2+|\nabla u|^2+\alpha |u|^2)\, dx \le \alpha |\Omega|^{4/N}.$$
Take $\alpha_k\to 0$ and denote by $(u_k)_{k\in\N}$ a sequence of minimizers. Then 
$$\int_{\Omega}(|\Delta u_k|^2+|\nabla u_k|^2)\, dx\to 0\hspace{.3cm}\text{as }k\to\infty$$
and $(u_k)_{k\in\N}$ is bounded in $L^2(\Omega)$. Observe that $u_k$ solves the equation 
\begin{align*}
\begin{cases}
\Delta^2 u_k -\Delta u_k+\alpha_k u_k=\mu_k|u_k|^{\frac{8}{N-4}}u_k,&\mbox{in}\ \Omega,\\
\partial_{\nu} u_k =\partial_{\nu}(\Delta u_k)= 0,&\mbox{on}\ \partial\Omega,
\end{cases}
\end{align*}
where $\mu_k=\Sigma_{\nu,\alpha_k}(\Omega)\le \alpha_k |\Omega|^{4/N}$. Define $v_k=\mu_k^{\frac8{N-4}}u_k$ so that \eqref{vk} holds. Interior estimates show that $v_k$ is smooth. Clearly $(v_k)_{k\in\N}$ converges strongly to zero in $H^2(\Omega)$. To show that $\limsup_k\|v_k\|_{L^{\infty}(\Omega)}<\infty$, one can just borrow the blow-up argument of Gidas and Spruck (arguing therefore by contradiction) used in \cite[Lemma 2.1]{AdimYad93} by taking the blow-up profile 
\begin{equation*}
 w_k(y):=t^{\frac{N-4}{2}}_{k}v_k(x_k+t_k y),
\end{equation*}
where
$(x_k)_{k\in\N}\subset\overline{\Omega}$ is such that
\begin{equation*}
 M_k=v_k(x_k)=\|v_k\|_{L^{\infty}(\Omega)}
\end{equation*}
and 
\begin{equation*}
 M_k t^{\frac{N-4}{2}}_{k}=1.
\end{equation*}
\end{proof}


\begin{proof}[Proof of Theorem \ref{rigidity}]
Lemma \ref{borneLinfinity} combined with Lemma \ref{borneLinfinity0} imply any sequence of minimizers uniformly vanish as $\alpha\to 0$. 
Using Poincar\'e inequality and the uniform convergence to zero, one then shows that $u-\frac1{|\Omega|}\int_\Omega u\, dx=0$ when $\alpha$ is small enough. This is the original argument of Ni and Takagi \cite{MR849484}, see also \cite{MR3564729,AdimYad93}.
\end{proof}

\bibliographystyle{plain}
\bibliography{critical-reference}
\end{document}